%%					%%
%%	 TJM sample file		%%
%%					%%

\documentclass{article}

\usepackage{amsmath,amssymb}	% to deal with mathematics

\usepackage{graphicx}		  % to deal with graphicals

%\usepackage{tjm}%%% This is it!

%%% Theorem Definitions %%%

\newcommand{\Ci}{\mathscr{C}}
\newcommand{\Sone}{\mathbb{S}^1}

\newcommand{\N}{\mathbb{N}}
\newcommand{\R}{\mathbb{R}}

\newcommand{\Z}{\mathbb{Z}}

\newcommand{\vertiii}[1]{{\left\vert\kern-0.25ex\left\vert\kern-0.25ex\left\vert #1 
    \right\vert\kern-0.25ex\right\vert\kern-0.25ex\right\vert}}

\usepackage[initials]{amsrefs}
\usepackage{amsmath,amsthm,amscd}
\usepackage{enumerate}

\usepackage{amssymb}
\usepackage{mathrsfs}
\usepackage{graphicx}

\usepackage{color}
\usepackage{mathtools}
\mathtoolsset{showonlyrefs=true}

\usepackage[T1]{fontenc}

\usepackage{amsmath,amsthm,amssymb,amscd}
\usepackage{enumerate}

\usepackage{amssymb}
\usepackage{mathrsfs}
\usepackage{graphicx}

\newtheorem{thm}{Theorem}
\newtheorem{lem}{Lemma}
\newtheorem{prop}{Proposition}

\newtheorem{defn}{Definition}
\newtheorem{rem}{Remark}

%\numberwithin{equation}{section}

\usepackage{enumitem}
\usepackage{mathtools,amssymb}
\usepackage{listliketab}
\newcounter{listliketablabel}
\newcommand*{\nextnum}{\addtocounter{listliketablabel}{1}(\thelistliketablabel)}
\makeatletter
\newcommand*{\storestyleofextra}[2][]{%
    \begin{lrbox}{\llt@list@box}
        \noindent
        \begin{minipage}{\linewidth}
            \begin{#2}[#1]
            \item[] \storeliststyle{}
            \end{#2}
        \end{minipage}
    \end{lrbox}\ignorespacesafterend
}
\makeatother

\storestyleofextra[noitemsep,label=\alph*)]{enumerate}

\begin{document}

\title{Historic  Behaviour for Random Expanding Maps on the Circle}

\author{Yushi Nakano}

%\affils{Osaka City University}

%\com{}

\date{}
%\subjclass{Primary 37C40; Secondary 37H10}

%\renewcommand{\keywordsname}{Key words}% when changing the name "Key words"
%\keywords{Historic behaviour; Random expanding maps}

%\authorname{Yushi Nakano}
%\address{Advanced mathematical institute, Osaka City University, Osaka,
%558-8585, JAPAN}
%\email{yushi.nakano@gmail.com}

\maketitle

\begin{abstract}
Takens constructed a residual subset of the state space consisting of initial points with historic  behaviour for expanding maps on the circle. We prove that this statistical property of expanding maps on the circle is preserved under small random perturbations. The proof is given by  establishing a random Markov partition, which follows from a random version of Shub's Theorem on topological conjugacy with the folding maps. 
%As a by-product, we obtain  a new formula for  the unique absolutely continuous ergodic invariant probability measure of random expanding maps on the circle.   
\end{abstract}

\maketitle

\section{Introduction}
Let $M$ be a compact smooth Riemannian manifold. For a dynamical system $f:M\to M$, the orbit  issued from an initial point $x$ in $M$ is said to have historic  behaviour  when there exists a continuous function $\varphi :M\to \R$ such that the time average
\[
\lim _{n\to \infty} \frac{1}{n} \sum ^{n-1} _{j=0} \varphi (f^j(x))
\]
does not exist. Several statistical quantities are given as the time average of some observable $\varphi$, 
and thus  it is  a natural question  in smooth dynamical systems theory whether the set of points  with historic  behaviour is not negligible in some sense;  it is typically formulated as a positive measure set with respect to the normalised Lebesgue measure. 
Takens  asked in \cite{Takens2008} whether there are persistent classes of smooth dynamical systems such that the set of initial points with historic  behaviour is of positive Lebesgue measure (\emph{Takens' Last Problem}).  
Very recently, it was affirmatively answered by Kiriki and Soma  \cite{KS2017} with diffeomorphisms having homoclinic tangencies.
The reader is asked to see   \cites{Ruelle2001, Takens2008, BDV04} for the background of historic  behaviour in this measure-theoretical sense; see also \cites{HK1990, LR2016}  for the (recent) contribution to  Takens' Last Problem for one-dimensional mappings and flows.

%The study of historic  behaviour is important but far from complete understanding. 

As another measurement to investigate historic  behaviour, 
Takens \cite{Takens2008} considered whether the set of points with historic  behaviour is a residual subset of the state space (i.e., the set of points with historic  behaviour is not negligible in a topological sense).
%, particularly whether it is a residual set.
For the doubling map on the circle, he constructed a residual subset consisting of initial points with historic  behaviour  by using symbolic dynamics. Since the method of symbolic dynamics is applicable to any expanding maps on the circle, the proof can be literally translated to expanding maps on the circle (see also \cite{BS2000} for another investigation of historic behaviour for expanding maps, and \cite{KLS2016} for historic behaviour of geometric Lorenz flows in topological sense).  
Our goal in this paper is to extend 
%the doubling map $E_2$ on the circle given by $E_2: x\mapsto 2x$ mod 1, and showed that the set of points for which $E_2$ has historic  behaviour is a residual set, i,e,.. 
%In this paper, we will extend 
his result for historic  behaviour (and its  generalisation to expanding maps on the circle) to a  random setting.
%: for any quenched random perturbation of an expanding map on the circle with small noise level, we can construct a residual set of initial points with historic  behaviour. The precise definition will be given in the next section.
For a random version of Takens' problem in measure-theoretical sense, we refer to the result by Ara\'ujo \cite{Araujo2000}; 
%Ara\'ujo  investigated some random perturbations of diffeomorphisms on a compact Riemannian manifold. I
\emph{in contrast to} our result in topological setting, he gave a negative answer to a random version of  Takens' problem in measure-theoretical sense for general diffeomorphisms under some conditions on noise.
%when noise in the systems is independent and identically distributed and satisfies several  conditions (called ``physical parametric noise" in \cite{Araujo}). 
%See \cite{Araujo} and \cite{Araujo2}.
%For a surprising (need nicer adj) (and contractive to our result (in some sense)) result for random version of historic  behaviour problem in measure-theoretical sense, we ask the reader to see  the result by Araujo \cite{Araujo}.

As in the  unperturbed case,
the key step in the proof is establishing a (random) Markov partition. This is given through proving a random version of Shub's Theorem on topological conjugacy of expanding maps with the folding maps. 
%As a by-product of this extension we will give a  formula for (the density function of) the unique absolutely continuous ergodic invariant probability measure of random expanding maps, which is a new formula  to the best of our knowledge. 

\subsection{Definitions and results}

Let $\Sone$ be a circle given by $\Sone =\R /\Z$. We endow $\Sone$ with a metric $d_{\Sone}(\cdot ,\cdot)$, where $d_{\Sone}(x, y)$ is the infimum of  $\vert \tilde x-\tilde  y \vert$ over all representatives $\tilde x,\tilde y$ of $x,y \in \Sone$, respectively.
Let ${\Ci}^{r}(\Sone ,\Sone )$ and $\mathrm{Homeo} (\Sone ,\Sone)$ be the spaces of all  endmorphisms of class $\mathscr C^r$ and homeomorphisms on the circle $\Sone$,  endowed with the usual  ${\Ci}^{r}$ and $\Ci ^0$ metrics $d_{\mathscr C^r} (\cdot ,\cdot)$ and $d _{\Ci ^0 }(\cdot ,\cdot )$, respectively, with $r >1$.
(Given that $r=k+\gamma$ for some $k\in \N$, $k\geq 1$ and $0\leq \gamma \leq 1$, $f\in \mathscr{C}^r(\Sone ,\Sone)$ denotes the $k$-th derivative of $f$ is $\gamma$-H\"older.) 
Let $\pi _{\Sone}:\R \to \Sone$ be the canonical projection on $\Sone$, i.e., $\pi _{\Sone} (\tilde x)$ is the equivalent class of $\tilde x\in \R$.  
We call $A$($\subsetneq \Sone$) a closed
interval of $\Sone$ if $A=\pi _{\Sone} (\tilde A)$ for some closed interval $\tilde A$ of $\R$. Open intervals or
left-closed and right-open intervals of $\Sone$ are defined in a similar manner.
We let $\mathcal F (\Sone)$ denote the space consisting of all nonempty left-closed and right-open intervals of $\Sone$,  all point sets  of $\Sone$ and the state space $\Sone$, endowed with the Hausdorff metric $d_H(\cdot ,\cdot )$. 
We endow ${\Ci}^{r}(\Sone ,\Sone )$, $\mathrm{Homeo} (\Sone ,\Sone)$ and $\mathcal F(\Sone)$ with the Borel $\sigma$-field.

Let $\Omega$ be a separable complete metric space endowed with the Borel $\sigma$-field $\mathcal B(\Omega)$ with a  probability measure $\mathbb P$. 
%\footnote{Probably completeness is not needed}
%We also assume that $\Omega$ is a standard probability space.  
Given a smooth map $f_0:\Sone \to \Sone $ of class $\mathscr{C} ^r$,
let $\{ f_{\epsilon} \} _{\epsilon >0}$ be a family of continuous mappings defined on $\Omega$ with values in $\mathscr{C} ^r(\Sone ,\Sone )$ such that
\begin{equation}\label{eq:assumption}
\sup\displaylimits_{\omega\in \Omega} d_{\mathscr{C}^r}(f_{\epsilon}(\omega) ,f_0) \rightarrow 0 \quad \mathrm{as} \;\epsilon \rightarrow 0.
\end{equation}
%(For a discussion on this condition, see the remark below  the proof Theorem \ref{prop:randomshub}.)
%\footnote{Can be weakened? See version 7 also.} 
For each $\epsilon >0$, 
adopting the notation $f_\epsilon (\omega ,\cdot)=f_\epsilon (\omega)$, 
the distance between $f_\epsilon(\omega ,x)$ and $f_\epsilon(\omega ^\prime ,x)$ is bounded by $d_{\mathscr{C}^r}(f_\epsilon (\omega),f_\epsilon (\omega ^\prime))$ for each $x\in \Sone$ and each $\omega , \omega ^\prime \in \Omega$. Thus,
it is straightforward to realise that $f_\epsilon :\Omega \times \Sone \to \Sone $ is a continuous (in particular, measurable) mapping.
When convenient, we will identify $f_0 :\Sone \to \Sone$ with the constant map
$ \Omega \ni \omega \mapsto f_0$. 

Let $f_0$  be an orientation-preserving expanding map on the circle, i.e., there exists a constant $\lambda _0>1$ such that
$
 \inf _{x}\frac{d}{dx} f_0 (x)    \geq \lambda  _0.
$
 For the properties of expanding maps, the reader is referred to \cite{KH95}. 
 Let $k\geq 2$ be the  degree of the covering map $f_0$.
In view of \eqref{eq:assumption} it then follows that $f_\epsilon (\omega)$ is an orientation-preserving  expanding map for each $\omega \in \Omega$ if $\epsilon $ is sufficiently small. In fact, if
$\lambda _0 = \inf _x  \frac{d}{dx} f_0 (x)$ and we set $\lambda =(\lambda _0 +1)/2$, then $\lambda >1$ and  we can find an $\epsilon _0 >0$ such that
\begin{equation}\label{assumption2}
\inf _{\omega} \inf _{x} \frac{\partial}{\partial x} f_\epsilon  (\omega , x)  \geq \lambda  , \quad  0\leq \epsilon <\epsilon _0 .
\end{equation}
Let $\delta _0<\frac{1}{2}(1-\frac{1}{k})$ be a positive number and $\eta $ a positive number such that 
\begin{equation}\label{eq:aaas}
\eta <\min\left\{ \frac{1}{2}, (\lambda -1)\delta _0 \right\}.
\end{equation}
We also assume  $\epsilon _0$ to be  sufficiently small such that
%\footnote{This condition SHOULD be relaxed although it is not very strange to be assumed.}
\begin{equation}\label{eq:knear}
\sup _{\omega \in \Omega} d _{\mathscr C^0} (f_\epsilon (\omega) ,f_0) < \eta, \quad 0\leq \epsilon <\epsilon _0.
\end{equation}
%It follows from  \eqref{eq:assumption} that if we take $\epsilon _0$ sufficiently small then
%Throughout the rest of this paper,  such that \eqref{assumption2} and \eqref{eq:knear} hold. 
%\footnote{Better to combine these two above and remove $\eta$?}
In particular, 
$f_\epsilon (\omega)$ is a covering map of $\Sone$ with degree $k$ for each $\omega \in \Omega$.

Since $f_0$ is a covering of $\Sone$ of degree $k\geq 2$, there exists a fixed point $p _0 \in \Sone$ for $f_0$. 
In view of \eqref{eq:assumption} and \eqref{assumption2} together with that  $f_0$ is locally a  diffeomorphism of the circle, we can find a closed interval $B=B_\epsilon$ including  $p_0$ such that if $0\leq \epsilon <\epsilon _0$, then $f_\epsilon (\omega ) :B \to f_\epsilon (\omega ) (B)$ is a diffeomorphism such that $B \subset f_\epsilon (\omega ) (B) $ for each $\omega \in \Omega$  (by taking $\epsilon _0$ sufficiently small if necessary). We assume that $\epsilon _0$ is sufficiently small such that 
 \begin{equation}\label{eq:knear2}
 \vert B \vert \leq \delta _0
 \end{equation} 
 for all $0\leq \epsilon <\epsilon _0$,
where $\vert B\vert$ is the  length of $B$ with respect to   $d_{\Sone}(\cdot ,\cdot)$. 
%From the condition \eqref{eq:knear2}, it follows that there exists a point which is in $\Sone$ but not in $B$. For simplicity we translate the point to   $0$.
%, so that $B$ is a closed interval strictly included in $[0,1)$. 
%(by virtue of \eqref{eq:knear2} with the fact that $\delta _0<\frac{1}{2}$) the distance $d_{\Sone}(x, y)$ coincides with $\vert  x - y\vert$ for each $x,y \in B$.
%, where $\bar x$, $\bar y$ are representatives in $[0,1)$ of $x,y$, respectively.

\begin{rem}
 When there is no ambiguity, the noise level $\epsilon$ will sometimes be omitted from the notation, in particular when the dependence on the noise parameter $\omega \in \Omega$ is already displayed. 
Throughout the rest of the paper we will also permit us to use $\epsilon _0$ as a way to denote the upper bound of a range $0\leq \epsilon <\epsilon _0$   for which \eqref{assumption2}, \eqref{eq:knear} and \eqref{eq:knear2} hold, even if $\epsilon _0$ may change between occurrences. %This will basically be  showcased only in   Theorem \ref{thm:BKS}.
\end{rem}

% By  \cite{Stafford1990}, there exists a Markov partition $\mathcal I_0 , \mathcal I_1, \ldots , \mathcal I_{k-1}$ of the expanding map $f_0$ on the circle  with some $k\geq 2$. Thus, it follows from \eqref{eq:assumption} and the continuity of $f_\epsilon $ that for each sufficiently small $\epsilon >0$, we can find a Markov partition $\mathcal I_0^{\epsilon ,\omega} ,\mathcal I _1^{\epsilon ,\omega },\ldots ,\mathcal I_{k-1}^{\epsilon ,\omega}$ of $f_\epsilon (\omega)$ such that the mapping $\omega \mapsto \overline{\mathcal I _j^\omega }$ from $\Omega$ to the space $\mathcal F(\Sone)$ of all closed and bounded subsets of $\Sone$, endowed with the Hausdorff metric,  is measurable  with respect to the Borel $\sigma$-field $\mathcal B(\mathcal F(\Sone))$ of $\mathcal F(\Sone)$ for each $0\leq j\leq k-1$. \footnote{WRITE the proof?}

Let $f: \Omega \to \Ci ^r(\Sone ,\Sone)$ be a continuous  mapping, and 
$\theta :\Omega \rightarrow \Omega$  a measure-preserving  homeomorphism  on $(\Omega ,\mathbb P)$ (see Remark \ref{rmk:3} for this condition). 
For simplicity, we also assume  $\theta$ to be ergodic. 
For each $n\geq 1$, let $f^{(n)} (\omega,x)$ be the fibre component in the $n$-th iteration of the skew product mapping
\begin{equation*}
\varTheta (\omega , x) =(\theta \omega ,f(\omega ,x)),\quad (\omega ,x) \in \Omega\times \Sone,
\end{equation*}
where we simply write $\theta \omega$ for $\theta (\omega)$. 
Setting the notation $f_\omega =f (\omega ,\cdot)$ and $f_{\omega}^{(n)}=f^{(n)}(\omega,\cdot)$, the explicit form of $f_{\omega}^{(n)}$ is
\begin{equation}\label{eq:fibre}
f^{(n)} _\omega =f _{\theta^{n-1}\omega } \circ f _{\theta^{n-2}\omega }\circ\cdots\circ f _\omega.
\end{equation}
For convenience, we set $f^{(0)}_\omega =\mathrm{id} _{\Sone}$ for each $\omega \in \Omega$.  
We call $\{ f^{(n)}_\omega (x)\} _{n\geq 0} = \{ x, f _\omega (x) ,f_{\omega} ^{(2)}(x) , \ldots \}$ the \emph{random orbit} of $f$ issued from $(\omega ,x) \in \Omega \times \Sone$. 
As in the treatment of the unperturbed case undertaken by Takens  \cite{Takens2008}, we now define historic  behaviour for random orbits. 
% \footnote{Extend this?}
\begin{defn}
We say that a random orbit issued from $(\omega ,x)$ has \emph{historic  behaviour} if  the empirical measure
\[
 \frac{1}{n} \sum _{j=0} ^{n-1} \delta _{f^{(j)} _ \omega (x)}
\]
does not converge to any probability measure on the circle in weak$^*$ sense, where $\delta _x$ is the Dirac measure at $x$.
\end{defn}
%\footnote{Should be defined for general measurable mapping $f:\Omega \to \Ci ^r(\Sone , \Sone)$?}

Our goal  is to prove the following theorem.
\begin{thm}\label{conj:1}
Let $0\leq \epsilon <\epsilon _0$. For $\mathbb P$-almost every $\omega \in \Omega$, we can find a residual subset $\mathcal R ^\omega$ of $\Sone$ such that for any $x\in \mathcal R^\omega$ the random orbit of $f_\epsilon$ issued from $(\omega ,x)$  has historic  behaviour. 
\end{thm}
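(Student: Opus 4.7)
The plan is to reduce to symbolic dynamics through a random Markov partition and then adapt Takens' oscillation argument in the symbol space. The central tool is a random version of Shub's theorem: a continuous family $\omega \mapsto h_\omega \in \mathrm{Homeo}(\Sone,\Sone)$, uniformly $C^0$-close to $\id$, satisfying the twisted conjugacy
\[
h_{\theta\omega} \circ f_\omega = E_k \circ h_\omega, \qquad E_k(x):= kx \bmod 1.
\]
I will construct $h_\omega$ by lifting $f_\omega$ as $\tilde f_\omega(x) = kx + \psi_\omega(x)$, with $\psi_\omega$ continuous, $\Z$-periodic and of uniformly small $C^0$-norm by \eqref{eq:knear}. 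Writing $\tilde h_\omega = \id + \phi_\omega$, the conjugacy equation becomes $\phi_\omega = \tfrac{1}{k}\psi_\omega + \tfrac{1}{k}\phi_{\theta\omega} \circ \tilde f_\omega$, solved by the absolutely convergent Perron-type series
\[
\phi_\omega(x) = \sum_{n=0}^{\infty} \frac{1}{k^{n+1}}\, \psi_{\theta^n\omega}\bigl(\tilde f_\omega^{(n)}(x)\bigr).
\]
Continuity of $\omega \mapsto \phi_\omega$ and the uniform bound $\|\phi_\omega\|_{C^0} \leq \eta/(k-1)$ are termwise consequences of \eqref{eq:knear}, and for $\ve_0$ small, $\tilde h_\omega$ is strictly increasing, hence descends to the desired $h_\omega$.

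Set $A_i^\omega := h_\omega^{-1}\bigl([i/k,(i+1)/k)\bigr)$ for $i = 0, \ldots, k-1$ and define the random itinerary
\[
c_\omega : \Sone \to \Sigma_k := \{0,\ldots,k-1\}^{\N}, \qquad c_\omega(x)_n = i \ \Longleftrightarrow\ f_\omega^{(n)}(x) \in A_i^{\theta^n\omega}.
\]
Modulo a countable set of boundary sequences, $c_\omega$ is a homeomorphism onto $\Sigma_k$ intertwining the random orbit structure with the shift; by \eqref{assumption2}, points whose first $n$ symbols agree lie within $C\lambda^{-n}$ of each other in $\Sone$, so $c_\omega$-preimages of sufficiently long cylinders form a basis of the topology on $\Sone$.

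Fix the two $E_k$-fixed points $y_a = 0$ and $y_b = 1/(k-1)$ and the cocycle fixed points $p_\omega^a := h_\omega^{-1}(y_a)$, $p_\omega^b := h_\omega^{-1}(y_b)$; these satisfy $f_\omega(p_\omega^a) = p_{\theta\omega}^a$, $f_\omega(p_\omega^b) = p_{\theta\omega}^b$, and lie uniformly close to distinct deterministic limits $p_0^a := h_0^{-1}(y_a)$, $p_0^b := h_0^{-1}(y_b)$. Choose a continuous $\varphi: \Sone \to [0,1]$ with $\varphi \equiv 0$ on a neighbourhood $U_a$ containing $\{p_\omega^a\}_{\omega \in \Omega}$ and $\varphi \equiv 1$ on a disjoint neighbourhood $U_b$ containing $\{p_\omega^b\}_{\omega \in \Omega}$, and for $m \in \N$, $\iota \in \{0,1\}$ set
\[
V_m^\iota(\omega) := \left\{ x \in \Sone \,:\, \left| \tfrac{1}{N}\sum_{j=0}^{N-1} \varphi\bigl(f_\omega^{(j)}(x)\bigr) - \iota \right| < \tfrac{1}{m} \text{ for some } N \geq m \right\},
\]
an open subset of $\Sone$. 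The heart of the argument is the density of $V_m^0(\omega)$: given a non-empty open $V \subset \Sone$, for $N$ large some length-$N$ cylinder has $c_\omega$-preimage inside $V$; extending its defining sequence by infinitely many copies of $a$ and invoking the random Markov property gives an $x \in V$ with $f_\omega^{(j)}(x) = p_{\theta^j\omega}^a \in U_a$ for every $j$ past the prefix, forcing the Birkhoff averages of $\varphi$ along $x$ to tend to $0$. Density of $V_m^1(\omega)$ is symmetric. Then $\mathcal R^\omega := \bigcap_m (V_m^0(\omega) \cap V_m^1(\omega))$ is a dense $G_\delta$, and any $x \in \mathcal R^\omega$ satisfies $\liminf_n \int \varphi\, d\mu_n^\omega(x) = 0$ and $\limsup_n \int \varphi\, d\mu_n^\omega(x) = 1$, where $\mu_n^\omega(x) := \tfrac{1}{n}\sum_{j=0}^{n-1}\delta_{f_\omega^{(j)}(x)}$, so its empirical measures cannot converge weakly and $x$ exhibits historic behaviour.

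The main obstacle is the random Shub step: the conjugacy must simultaneously be $\theta$-equivariant in the twisted sense, uniformly $C^0$-close to $\id$ (so that each $p_\omega^a, p_\omega^b$ stays inside $U_a, U_b$ respectively and so that the Markov cells shrink geometrically), and continuous in $\omega$; the explicit Perron series above achieves all three at once. The subsequent symbolic argument is essentially the unperturbed Takens construction, and measurability of $\omega \mapsto \mathcal R^\omega$ follows from the measurability of $h_\omega$.
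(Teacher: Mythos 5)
Your proposal reaches the conclusion by a genuinely different route from the paper. The paper proves a random Dowker-type theorem (reducing the problem to exhibiting a single measurable $X:\Omega\to\Sone$ whose \emph{past} random orbit is dense and which has historic behaviour in a strengthened sense), and then constructs such an $X=X_{\bar s}$ by interpolating a periodic symbol sequence with a sequence generic for the SRB measure of $E_k$, invoking Birkhoff's ergodic theorem for $(\theta\times E_k,\mathbb P\times m)$. You instead build the residual set $\mathcal R^\omega$ directly, as a countable intersection of open dense sets $V_m^{\iota}(\omega)$, with density proved by tailing any given cylinder with a constant symbol so that the random orbit lands on a random periodic point; this bypasses Dowker, Birkhoff, ergodicity of $\theta$, and the SRB step entirely. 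What each buys: the paper's scheme factors through the clean abstract Proposition~\ref{randomDowker}, but the ergodic-theoretic step is exactly what forces the conclusion to be stated only for $\mathbb P$-a.e.\ $\omega$ (see the paper's Remark~\ref{rmk:4}); your direct construction is fibre-by-fibre and, once the fibre-wise Markov partition is in hand, appears to produce $\mathcal R^\omega$ for \emph{every} $\omega$, which would in fact strengthen the theorem. The random Shub conjugacy is also obtained differently: you solve an explicit Perron series for a lift $\id+\phi_\omega$, whereas the paper runs a graph transform to produce a random invariant point and then takes a uniform limit of piecewise-linear monotone interpolations $h_n(\omega)$. Your construction is more explicit, but monotonicity of $\tilde h_\omega$ is not, as written, a consequence of $C^0$-smallness (a $C^0$-small perturbation of the identity need not be monotone); you need the standard expansiveness argument, namely that if $\tilde h_\omega(x)\geq\tilde h_\omega(y)$ for $x<y$ then applying the conjugacy and the expansion $\tilde f_\omega^{(n)}(y)-\tilde f_\omega^{(n)}(x)\to\infty$ contradicts the uniform bound $\|\phi\|_{C^0}\leq\eta/(k-1)$. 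The paper's piecewise-linear scheme makes monotonicity automatic at every finite stage.

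There is one concrete gap you must repair: for $k=2$ the map $E_k$ has only \emph{one} fixed point, since $y_b=1/(k-1)=1\equiv 0=y_a$ on $\Sone$, so your two ``cocycle fixed points'' coincide and the function $\varphi$ cannot separate them. The fix is routine (use the period-two $E_2$-orbit $\{1/3,2/3\}$ for the second symbol block, with $\varphi\equiv 1$ on a neighbourhood of that whole orbit), but as written the argument fails precisely in the model case of the doubling map, so this needs to be stated. With that repair, and with the monotonicity justification spelled out, the argument is sound and arguably cleaner than the paper's.
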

%\begin{rem}
%Theorem \ref{conj:1} means that the set of  points whose random orbits have historic behaviour is large \emph{in the topological sense}. This is contractive to the result by \cite[Theorem A]{BKS96} that says the set is always of Lebesgue measure zero under the appearance of  the "physical" noise, even if the unperturbed dynamics has the set of points with historic  behaviour with positive Lebesgue measure. See  \cite{Araujo2000}, \cite{KirikiSoma} and \cite{Takens2008} for the detailed discussion for this topic. 
%\end{rem}
\begin{rem}
It is  natural to ask if it might be possible to replace ``$\mathbb P$-almost every'' with ``every'' in Theorem \ref{conj:1} since the assumption on our perturbations is given for every $\omega \in \Omega$ as in \eqref{eq:assumption}. 
However,  this strengthening of the conclusion in Theorem \ref{conj:1} seems to be impossible  if the proof utilises  the ergodicity of $(\theta ,\mathbb P)$ as in this paper, because the ergodicity of $(\theta , \mathbb P)$ (together with Birkhoff's ergodic theorem)  only ensures  that  time averages coincide with their corresponding space averages for $\mathbb P$-almost every $\omega \in \Omega$. 
%This  obstacle 
%for the improvement 
%only appears in a brief proof  of a theorem, s
See  Remark  \ref{rmk:3}   for the  detail. 
Another direction to improve Theorem \ref{conj:1} is to weaken the assumption on our perturbations,  
see  Remark \ref{rmk:4} for the detailed discussion about the direction.
\end{rem}

\section{The proof}
We start the proof by considering a random version of Dowker's Theorem. This theorem asserts that if we find a dense orbit with historic  behaviour (of the unperturbed dynamics), then there exists a residual subset  of the phase space such that the orbit of each point in the set has historic behaviour.   
For this purpose, we need to give stronger  forms of the definitions of denseness and historic  behaviour for random orbits.
% of a random variable with values in $\Sone$. 
When there is no confusion, we employ the notation $f_\omega^{(n)} =f ^{(n)}(\omega ,\cdot)$  for $f=f_\epsilon$ once  $0\leq \epsilon <\epsilon _0$ is given.
For a continuous function $\varphi : \Sone \to \R$, $\omega \in \Omega$, $x\in \Sone$ and $n\geq 1$, we define the truncated time average $B_n (\varphi ; \omega ,x)$ of the observable $\varphi$ by
\[
B_n(\varphi ; \omega ,x)=\frac{1}{n} \sum _{j=0} ^{n-1} \varphi \circ f_\omega ^{(j)} (x). 
\]
Note that the orbit issued from $(\omega ,x)$ has historic  behaviour if $B_n(\varphi ;\omega ,x)$ does not converge for some continuous function $\varphi :\Sone \to \R$.

\begin{defn}
Let $f:\Omega \to \Ci ^r (\Sone ,\Sone)$ be a measurable mapping. Let   $X$ be a random variable on  $\Omega$ with values in $\Sone$. We call  
\[
\{ X(\omega) ,f_{\omega}(X(\omega)) , f_{\omega} ^{(2)} (X(\omega)) ,\ldots \}
\]
 the \emph{future random orbit} of $X$ at $\omega$, and 
\[
\{ X(\omega) ,f_{\theta ^{-1} \omega}(X(\theta ^{-1} \omega)) , f_{\theta ^{-2}\omega} ^{(2)} (X(\theta ^{-2} \omega)) ,\ldots \}
\]
 the \emph{past random orbit}  of $X$ at $\omega$ (see \cite{BBM-D} for this notation).

We say that $X$ has \emph{historic  behaviour} at $\omega$ if there exist real numbers $\alpha , \beta$ and a continuous function $\varphi :\Sone \to \R$ such that 
\begin{equation}\label{eq:strongHB}
\liminf _{n\to \infty} B_n (\varphi ; \theta ^{-\ell}\omega ,X(\theta ^{-\ell}\omega)) <\alpha <\beta <\limsup _{n\to \infty} B_n(\varphi ; \theta ^{-\ell} \omega ,X(\theta ^{-\ell} \omega))
\end{equation}
for  each $\ell \geq 0$ (that is, the time average of $\varphi$ along the future random orbit of $X$  does not exist at each $\theta ^{-\ell}\omega$).
\end{defn}

%For $\omega \in \Omega$,  let $\mathcal R ^\omega$ be the set of all points $x\in \Sone $ such that the random orbit issued from $(\omega ,x) $  has historic  behaviour.  
%\footnote{MEASURABILITY of $\mathcal R^\omega$. Probably the compliment of $\mathcal R^\omega$ can be easily shown to be measurable by reiterating the argument for the measurability of $\mathcal I^\omega $, and so is $\mathcal R^\omega$. Or now (changed the statement of the prop 4), we even do not need the measurability?}

\begin{prop}\label{randomDowker}
Let $f:\Omega \to \Ci ^r (\Sone ,\Sone)$ be a measurable mapping and $\omega \in \Omega$. Assume that there exists a measurable mapping $X:\Omega \to \Sone$  such that the past random orbit of  $X$  at $\omega$ is  dense in $\Sone$  and $X$ has historic  behaviour  at $\omega$. 
Then, we can find a residual subset $\mathcal R ^\omega$ of $\Sone$ such that for any $x\in \mathcal R^\omega$,  the random orbit  issued from $(\omega ,x)$  has historic  behaviour. 
\end{prop}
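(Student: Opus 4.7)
The plan is to adapt Dowker's classical Baire-category argument by constructing an explicit dense $G_\delta$ subset $\mathcal{R}^\omega$ of $\Sone$ whose points have non-convergent time averages against a single continuous test function. Fix the triple $(\varphi,\alpha,\beta)$ furnished by the hypothesis that $X$ has historic behaviour at $\omega$, and set
\[
\mathcal{R}^\omega := \bigcap_{N\geq 1}\bigcup_{n\geq N}\bigl\{x\in\Sone : B_n(\varphi;\omega,x) < \alpha\bigr\}\;\cap\;\bigcap_{N\geq 1}\bigcup_{n\geq N}\bigl\{x\in\Sone : B_n(\varphi;\omega,x) > \beta\bigr\}.
\]
For each $n$, the map $x\mapsto B_n(\varphi;\omega,x)$ is continuous on $\Sone$ as a finite sum of compositions of continuous functions, so both factors above are $G_\delta$ and hence $\mathcal{R}^\omega$ is $G_\delta$. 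Any $x\in\mathcal{R}^\omega$ has $B_n(\varphi;\omega,x)<\alpha$ and $B_n(\varphi;\omega,x)>\beta$ for infinitely many $n$, so $B_n(\varphi;\omega,x)$ does not converge; testing against $\varphi$ then forces the empirical measures $\frac{1}{n}\sum_{j=0}^{n-1}\delta_{f_\omega^{(j)}(x)}$ to fail weak-$*$ convergence, so the random orbit issued from $(\omega,x)$ has historic behaviour.

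It remains to show that $\mathcal{R}^\omega$ is dense. Put $y_\ell := f^{(\ell)}_{\theta^{-\ell}\omega}(X(\theta^{-\ell}\omega))$ for $\ell\geq 0$; by hypothesis $\{y_\ell\}_{\ell\geq 0}$ is dense in $\Sone$. The cocycle identity $f^{(n)}_\omega\circ f^{(\ell)}_{\theta^{-\ell}\omega} = f^{(n+\ell)}_{\theta^{-\ell}\omega}$, immediate from \eqref{eq:fibre}, gives $f_\omega^{(j)}(y_\ell) = f^{(j+\ell)}_{\theta^{-\ell}\omega}(X(\theta^{-\ell}\omega))$ for all $j\geq 0$, and separating the first $\ell$ terms in the sum defining $B_{n+\ell}(\varphi;\theta^{-\ell}\omega,X(\theta^{-\ell}\omega))$ then yields
\[
\bigl|B_n(\varphi;\omega,y_\ell) - B_{n+\ell}(\varphi;\theta^{-\ell}\omega,X(\theta^{-\ell}\omega))\bigr| \leq \frac{2\ell\sup_{\Sone}|\varphi|}{n}\longrightarrow 0
\]
as $n\to\infty$, for each fixed $\ell$. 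Passing to $\liminf$ and $\limsup$ and invoking \eqref{eq:strongHB} gives $\liminf_n B_n(\varphi;\omega,y_\ell)<\alpha$ and $\limsup_n B_n(\varphi;\omega,y_\ell)>\beta$, so $y_\ell\in\mathcal{R}^\omega$ for every $\ell\geq 0$. Density of $\{y_\ell\}_{\ell\geq 0}$ therefore passes to $\mathcal{R}^\omega$, which is a dense $G_\delta$ and hence residual.

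The only substantive step is the cocycle estimate, and it shows exactly why the definition of historic behaviour for the random variable $X$ must be quantified uniformly in $\ell\geq 0$: the oscillation of $B_m(\varphi;\theta^{-\ell}\omega,X(\theta^{-\ell}\omega))$ is what gets transferred to oscillation of $B_n(\varphi;\omega,y_\ell)$, so such oscillation is needed at every shifted base $\theta^{-\ell}\omega$ in order to place each $y_\ell$ into $\mathcal{R}^\omega$ and thus densify it.
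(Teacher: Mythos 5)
Your proof is correct and follows essentially the same route as the paper: you define the identical dense $G_\delta$ set $\mathcal{R}^\omega$, establish the same $O(\ell/n)$ cocycle estimate transferring oscillation of time averages from the shifted base point $\theta^{-\ell}\omega$ to $y_\ell$, and conclude density from density of the past random orbit. The only cosmetic differences are that you compare $B_n(\varphi;\omega,y_\ell)$ with $B_{n+\ell}(\varphi;\theta^{-\ell}\omega,\cdot)$ rather than with $B_n$ at the same index, and you pass to $\liminf/\limsup$ directly instead of extracting a realizing subsequence, both of which work equally well.
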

\begin{proof}
Let $\alpha ,\beta$ and $\varphi$ be constants and a continuous function given in \eqref{eq:strongHB} for a measurable mapping $X:\Omega \to \Sone$ and $\omega \in \Omega$, where $X$ has historic  behaviour at $\omega$ by hypothesis. We let $\mathcal R^\omega =s_1^\omega \cap s_2^\omega$, where 
\begin{align}\label{eq:dda3}
&s_1^\omega = \bigcap _{N\geq 1} \bigcup _{n\geq N} \left\{ x\in \Sone : B_{n} (\varphi ;  \omega ,x) <\alpha  \right\},\\ 
&s_2^\omega = \bigcap _{N\geq 1} \bigcup _{n\geq N} \left\{ x\in \Sone : B_{n} (\varphi ; \omega ,x) >\beta\right\}  .
\end{align}
%For simplicity we write $s_1^\omega$ and $s_2^\omega$ for the set in the left big parenthesis and the set in the right big parenthesis in \eqref{eq:dda3}, respectively, i.e., $\mathcal R^\omega =s^\omega _1 \cap s^\omega _2$.  
Then it is straightforward to see that for any $x\in \mathcal R^\omega$, we have
\begin{equation*}
\liminf _{n\to \infty} B_n (\varphi ; \omega ,x) \leq \alpha <\beta \leq \limsup _{n\to \infty} B_n(\varphi ; \omega , x ).
\end{equation*}
In particular, the random orbit  issued from $(\omega ,x)$  has historic  behaviour. 
%Then 
%Let $\Omega _0$ be the set of $\omega$'s such that \eqref{eq:strongHB} holds. Then 
%it is straightforward to see that   $X(\omega)$ is $\mathbb P$-almost surely in $\mathcal R^\omega$.
%(Indeed.. or more precisely..)
%Conversely, 
%for each $\omega \in \Omega$.
%Consequently, for any $\omega \in \Omega$ and $x\in \mathcal R ^\omega$, the random orbit issued from $(\omega ,x)$ has historic  behaviour. 
Note also  that
$\mathcal R ^\omega$ is a countable intersection of open sets.

The rest of the proof is devoted to 
 showing that   the past random orbit  of $X$ at $\omega$  is included in  $\mathcal R ^{ \omega}$. 
Since the past random orbit of $X$ at $\omega$ is  dense in $\Sone$ by hypothesis,  
it leads to  that $\mathcal R^\omega$ is dense in $\Sone$, and we complete the proof.
Let $\ell \geq 1$. 
Due to the observation $f_\omega ^{(i)} \circ f_{\theta ^{-\ell} \omega} ^{(\ell)} =f^{(i+\ell )} _{\theta ^{-\ell }\omega} $ for any $i\geq 0$, we have
\begin{equation*}
B_n\left(\varphi  ;\omega , f^{(\ell )} _{\theta ^{-\ell }\omega} (X(\theta ^{-\ell} \omega))\right) 
=\frac{1}{n} \sum _{j=\ell} ^{n+\ell-1} \varphi \circ f^{(j)} _{\theta ^{-\ell }\omega} (X(\theta ^{-\ell} \omega)).
\end{equation*}
Therefore, for each $n\geq \ell$ 
%&=\frac{1}{n} \sum _{j=0} ^{n-1} \varphi \circ f_{\omega} ^{(j+i)} (x_0) \\
\begin{multline*}
B_n\left(\varphi  ;\omega , f^{(\ell )} _{\theta ^{-\ell }\omega} (X(\theta ^{-\ell} \omega))\right) -B_n(\varphi ;\theta ^{-\ell} \omega ,X(\theta ^{-\ell} \omega)) \\
= \frac{1}{n} \left( \sum _{j=n} ^{n+\ell-1} \varphi \circ f_{\theta ^{-\ell }\omega} ^{(j)}(X(\theta ^{-\ell} \omega))  - \sum _{j=0} ^{\ell -1} \varphi \circ f_{\theta ^{-\ell }\omega} ^{(j)}(X(\theta ^{-\ell} \omega))\right).
\end{multline*}
Hence we have
\begin{equation}\label{eq:dda1}
\left\vert B_n\left(\varphi  ;\omega , f^{(\ell )} _{\theta ^{-\ell }\omega} (X(\theta ^{-\ell} \omega ))\right) -B_n(\varphi ;\theta ^{-\ell} \omega ,X(\theta ^{-\ell} \omega)) \right\vert \leq \frac{2\ell \Vert \varphi \Vert _{\Ci ^0}}{n},
\end{equation}
which goes to zero as $n$ goes to infinity for any fixed $\ell \geq 1$.

Let
   $\{ n_k\} _{k\geq 1} $ be a sequence such that
%$$ for any real number $\rho$ between  
$B_{n_k}(\varphi ;\theta ^{-\ell} \omega ,X(\theta ^{-\ell} \omega)) $
%, be a subsequence such that  
 converges to 
the infimum limit in \eqref{eq:strongHB}
%of   $\{B_n (\varphi ; \theta ^{-\ell} \omega ,X(\theta ^{-\ell}\omega)) \}_{n\geq 1}$
 as $k$ goes to infinity. 
Then, in view of 
%\eqref{eq:strongHB} with $\omega$ replaced by $\theta ^{-\ell} \omega$, together with 
\eqref{eq:dda1} we have
\[
B_{n_k}\left(\varphi  ;\omega , f^{(\ell )} _{\theta ^{-\ell }\omega} (X(\theta ^{-\ell} \omega ))\right) <\alpha \quad \text{if $k$ is sufficiently large.}
\]
This implies that $f^{(\ell)} _{\theta ^{-\ell}  \omega} (X(\theta ^{-\ell} \omega)) $ is in $s_1^\omega$. 
% is in the set in the left big parenthesis of \eqref{eq:dda3}.
In a similar manner, we can show that $f^{(\ell)} _{\theta ^{-\ell}  \omega} (X(\theta ^{-\ell} \omega))$ is in $s_2^\omega$. Therefore, $f^{(\ell)} _{\theta ^{-\ell}  \omega} (X(\theta ^{-\ell} \omega))$  is in $\mathcal R^\omega$, and  the past random orbit of $X$ at $\omega$ is included in $\mathcal R^\omega$.
%\footnote{Comment on full measure?}
%Finally, it follows from the denseness of the random orbit  of $X$ that $\mathcal R^\omega$  is $\mathbb P$-almost surely a dense subset of  $\Sone$,
%%. 
%%Since $\mathcal R^\omega$ is a countable intersection of open sets such that for any $x\in \mathcal R ^\omega$, the random orbit issued from $(\omega ,x)$ has historic  behaviour, 
%and we complete the proof.
%\footnote{improve English more?}
% $\mathcal R^\omega$ is $\mathbb P$-almost surely a residual subset of $\Sone$. 
%, and so is $\mathcal R^\omega$.
\end{proof}
%\begin{rem}
%The pointwise version  of Proposition \ref{randomDowker}  might \emph{not} be correct. That is, we do not know whether the following is true: "Given $\omega \in \Omega$, assume that there exists a   point $x_0\in \mathcal R^\omega$ such that the  random orbit  issued from $(\omega ,x _0)$  is dense in $\Sone$. Then, $\mathcal R^\omega$ is a residual subset of $\Sone$."
%%it migh if we only assume the condition that "for $\omega$, we can find a point $x_0=x_0(\omega)$ such that the random orbit issued from $(\omega ,x_0)$ has historic  behaviour and is dense in $\Sone$", then we may \emph{not} get the conclusion of \ref{randomDowker}. 
%\end{rem}

Due to Proposition \ref{randomDowker}, Theorem \ref{conj:1} is reduced to constructing \emph{one} measurable mapping $X$ such that  for $\mathbb P$-almost every $\omega \in \Omega$, the past random orbit of $X$ at $\omega$ is  dense in $\Sone$ and $X$ has historic  behaviour at $\omega$.

\subsection{Shub's Theorem and Markov partition}

In the next subsection, we establish the coding of \emph{graphs} 
%including a given point $(\omega ,x)$ 
associated with  a \emph{random Markov partition} of $f_\epsilon$, which is a key ingredient in our proof. 
%Random Markov partition will be constructed by a random version of Shub's theorem below
For that purpose,
 %of constructing a Markov partition of $f_\epsilon$, 
 we will need the following  extension of Shub's Theorem  on topological conjugacy between expanding mappings and the folding mapping with the same degree
 % (see \cite{Shub1970} for detail)
  to our random setting. 
  
Set $\mathcal A =\{ 0,1 ,\ldots ,k-1\}$.  
For  $s=(s_0,s_1, \ldots , s_m,s_{m+1},\ldots , s_{n},\ldots ,s_{N-1})\in \mathcal A^{N}$ with integers $0\leq m\leq n\leq N-1$, 
we  denote   $(s_m,s_{m+1},\ldots , s_{n})$   by $[s]_m^{n}$. 
Recall that in view of \eqref{eq:knear}, the degree of $f_\epsilon (\omega ,\cdot):\Sone \to\Sone$ is $k$ if $0\leq \epsilon <\epsilon _0$ and $\omega \in \Omega$.
\begin{thm}\label{prop:randomshub}
Suppose that $0\leq \epsilon <\epsilon _0$. 
%\footnote{OK?}
Then we can find a continuous mapping $h :\Omega \to \mathrm{Homeo}(\Sone ,\Sone)$ (in particular, measurable mapping)  which satisfies that 
\begin{equation}\label{eq:conjugacy}
h (\theta \omega) \circ E_k = f_\epsilon (\omega ) \circ h(\omega) 
\end{equation}
for any $\omega \in \Omega$, where $E_k :\Sone \to \Sone $ is the $k$-folding mapping defined by $E_k(x) =\pi _{\Sone}(k\tilde x)$ for each $x\in \Sone$ with a representative $\tilde x$ of $x$.

Furthermore, for every $\omega ,\omega ^\prime \in \Omega$ we have
\begin{equation}\label{eq:knear3}
d_{\Ci ^0 } (h(\omega) ,h(\omega ^\prime)) \leq  \delta _0,
\end{equation}
where $\delta _0$ is given in \eqref{eq:aaas}.
\end{thm}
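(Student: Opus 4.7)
My plan is to prove Theorem \ref{prop:randomshub} by a contraction-mapping argument performed in cocycle form on the universal cover. Fix a continuous family of lifts $\tilde f_\omega:\R\to\R$ of $f_\epsilon(\omega,\cdot)$; each is an increasing homeomorphism of $\R$ satisfying $\tilde f_\omega(x+1)=\tilde f_\omega(x)+k$ and $\tilde f_\omega^\prime\geq \lambda$ by \eqref{assumption2}. Let $\mathscr S$ denote the space of continuous degree-one maps $u:\R\to\R$ (i.e.\ $u(x+1)=u(x)+1$), equipped with the metric $d(u,v)=\sup_{x\in \R}|u(x)-v(x)|$, which is finite because $u-v$ is $1$-periodic and makes $\mathscr S$ a complete metric space. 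The lifted form of \eqref{eq:conjugacy} is precisely $\tilde h_\omega=T_\omega(\tilde h_{\theta\omega})$, where
\[
T_\omega(u)=\tilde f_\omega^{-1}\circ u\circ \tilde E_k,\qquad \tilde E_k(x)=kx.
\]
A direct check shows $T_\omega(\mathscr S)\subset \mathscr S$, and since $\tilde f_\omega^{-1}$ is $\lambda^{-1}$-Lipschitz, each $T_\omega$ is a $\lambda^{-1}$-contraction on $\mathscr S$.

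The desired family will be obtained as the limit of the backward cocycle iterates
\[
\tilde h_\omega^{(n)}:=T_\omega\circ T_{\theta\omega}\circ\cdots\circ T_{\theta^{n-1}\omega}(\mathrm{id}).
\]
A uniform a priori bound $\sup_\omega d(T_\omega(\mathrm{id}),\mathrm{id})<\infty$ (which follows from \eqref{eq:knear} together with boundedness of the defect $\tilde f_0(x)-kx$) combined with contraction gives $d(\tilde h_\omega^{(n+1)},\tilde h_\omega^{(n)})\leq C\lambda^{-n}$, so the iterates converge uniformly in $\omega$ to a limit $\tilde h_\omega\in\mathscr S$. Taking $n\to\infty$ in $\tilde h_\omega^{(n+1)}=T_\omega(\tilde h_{\theta\omega}^{(n)})$ yields the cocycle identity $\tilde h_\omega=T_\omega(\tilde h_{\theta\omega})$, i.e.\ the lifted form of \eqref{eq:conjugacy}. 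Continuity of $\omega\mapsto\tilde h_\omega$ is inherited from continuity of each finite iterate (as $f_\epsilon$ and $\theta$ are continuous) together with the uniform convergence.

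The remaining structural task is to show that $\tilde h_\omega$ descends to a homeomorphism of $\Sone$. Monotonicity is preserved by each $T_\omega$ (since $\tilde f_\omega^{-1}$ and $\tilde E_k$ are both increasing), so every iterate is nondecreasing and hence so is $\tilde h_\omega$. Strict monotonicity follows by contradiction: if $\tilde h_\omega$ were constant on some interval $[a,b]$ with $b>a$, then iterating the cocycle identity would force $\tilde h_{\theta^n\omega}$ to be constant on $[k^na,k^nb]$; but the preimage of any point under a nondecreasing degree-one element of $\mathscr S$ has diameter at most $1$ (easy consequence of $u(x+1)=u(x)+1$), contradicting $k^n(b-a)\to\infty$. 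Hence $\tilde h_\omega$ is a strictly increasing, degree-one, continuous surjection of $\R$, and descends to a homeomorphism $h(\omega)\in \mathrm{Homeo}(\Sone,\Sone)$.

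Finally, for the closeness bound \eqref{eq:knear3}, I would compare each $\tilde h_\omega$ with the unperturbed conjugacy lift $\tilde h_0$ obtained by the same procedure applied to the constant cocycle $f_0$. Combining contraction of $T_\omega$ with the estimate $\sup_y|\tilde f_\omega^{-1}(y)-\tilde f_0^{-1}(y)|\leq \eta/\lambda$ (immediate from $|\tilde f_\omega-\tilde f_0|\leq \eta$ via \eqref{eq:knear} and the expansion bound) and taking the supremum over $\omega$ yields $\sup_\omega d(\tilde h_\omega,\tilde h_0)\leq \eta/(\lambda-1)$, whence \eqref{eq:knear3} follows from \eqref{eq:aaas} by the triangle inequality. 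The main technical obstacle I anticipate is careful bookkeeping of these constants to squeeze out the precise bound $\delta_0$ rather than a numerical multiple of it; if necessary, a direct comparison of $\tilde h_\omega$ with $\tilde h_{\omega'}$ that avoids the triangle detour through $\tilde h_0$ should close the gap.
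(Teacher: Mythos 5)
Your proposal is correct and takes a genuinely different route from the paper. The paper first constructs a random fixed point $p:\Omega\to\tilde B$ by a contraction on $\Ci^0(\Omega,\tilde B)$, then hand-builds the conjugacy through $k$-adic grid points $a_j^n(\omega)$ (pulled back by inverse branches and anchored at $p(\theta^n\omega)$), interpolates piecewise-linearly to get approximants $h_n$, and takes a uniform limit; the bound \eqref{eq:knear3} is then obtained by an induction over $n$ on the $a_j^n$'s. You instead lift to $\R$, recast \eqref{eq:conjugacy} as a cocycle fixed-point equation $\tilde h_\omega = T_\omega(\tilde h_{\theta\omega})$ on the space $\mathscr S$ of degree-one maps, and exploit that each $T_\omega(u)=\tilde f_\omega^{-1}\circ u\circ\tilde E_k$ is a uniform $\lambda^{-1}$-contraction; the pullback iterates starting from $\mathrm{id}$ then converge at a geometric rate uniform in $\omega$, which delivers existence, the cocycle identity, and continuity of $\omega\mapsto h(\omega)$ in one stroke. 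You need to supply a separate strict-monotonicity argument (a constancy interval would be blown up by $k^n$ under the cocycle, contradicting the length-$<1$ constraint from degree one) to ensure the limit descends to a homeomorphism --- a step the paper gets for free because its grid points $a_j^n(\omega)$ are strictly increasing by construction. For \eqref{eq:knear3}, the direct comparison you suggest as a fallback does close the factor-of-two gap: from $\tilde h_\omega=T_\omega(\tilde h_{\theta\omega})$, $\tilde h_{\omega'}=T_{\omega'}(\tilde h_{\theta\omega'})$, the triangle inequality plus contraction gives $D\le\lambda^{-1}D+\lambda^{-1}\eta$ and hence $D\le\eta/(\lambda-1)<\delta_0$ by \eqref{eq:aaas}, where $D=\sup_{\omega,\omega'}d_{\Ci^0}(h(\omega),h(\omega'))$, provided one reads $\eta$ as bounding $\sup_{\omega,\omega'}d_{\Ci^0}(f_\epsilon(\omega),f_\epsilon(\omega'))$ --- the same slightly generous reading that the paper itself uses in its estimate of $S_1$. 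Your route is shorter and closer to the textbook proof of Shub's theorem, while the paper's route produces the $k$-adic grid explicitly, which it then reuses directly in building the Markov partition.
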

%\begin{rem}
%For the purpose of proving Theorem \ref{conj:1} only, a weaker version of Theorem \ref{prop:randomshub} might be enough. \footnote{Check.} However, (full statement of) Theorem \ref{prop:randomshub} will be helpful for the reader to understand the dynamics of randomly perturbed expanding maps on the circle, and thus this was carried here. 
%\end{rem}
\begin{proof}
 %As in the construction of topological conjugacy for the unperturbed expanding maps established by Shub \cite{Shub1970}, we employ a lift $\tilde f_\epsilon (\omega) :\mathbb R \to \mathbb R$ of $f_ \epsilon (\omega)$ for fixed $\omega \in \Omega$. The difference with the unperturbed case is that the existence of the fixed point of $f_\epsilon$ is now not trivial.
 %which preserves an invariant continuous mapping of $f_\epsilon$. 
 Throughout the proof, we fix $0\leq \epsilon <\epsilon _0$ and omit it  in  notions if there is no confusion. Recall that $B$ is a  closed interval of $\Sone$ (given above \eqref{eq:knear2}) such that  $B\subset f_\epsilon(\omega)(B)$ for each $\omega\in\Omega$.
Then, one can find a lift $\tilde f_\epsilon (\omega):\R\to \R$ of the $k$-covering $f_\epsilon (\omega):\Sone \to \Sone$ 
%(so that $\tilde f_\epsilon (\omega)(\tilde x+1)=\tilde f_\epsilon (\tilde x)+k$ for each $\tilde x\in \R$)
 with $\omega \in \Omega$  and a closed  interval $\tilde B\subset \R$ such that $\pi _{\Sone}(\tilde B)=B$ and $\tilde B\subset \tilde f_\epsilon (\omega)(\tilde B)$ for each $\omega \in \Omega$.
  
 We shall first  find a  continuous mapping $p : \Omega \to \tilde B$, which is invariant under  $\tilde f_\epsilon$, i.e., $\tilde f_\epsilon (\omega , p (\omega)) = p (\theta \omega)$ for every $\omega \in \Omega$. 
 %and $f_\epsilon (\omega) :B\to f_\epsilon(\omega)(B)$ is a diffeomorphism for every $\omega \in \Omega$.  
 We denote for each $\omega \in \Omega$ the inverse mapping of  the diffeomorphism $\tilde f_\epsilon (\omega)$ on $\R$  by $F(\omega)$.
 %, i.e., $F(\omega ) :f_\epsilon (\omega) (B) \to B$. 
 Then, 
% % $F(\omega ):B\to B$ is also well defined since $B\subset f_\epsilon (\omega) (B)$ for every $\omega \in \Omega$.
% %Hence,
%  with the notation of the space of all $\Ci ^r$ diffeomorphisms on $\R$  by $ \mathrm{Diff}^r (\R, \R)$, it follows from the continuity of $f_\epsilon$ that $F:\Omega \to \mathrm{Diff} ^r (\R,\R)$ is continuous.
% % coincides with the inverse branch of $f_\epsilon (\omega)$ restricted on $B$. 
%% \footnote{More comment?} 
% Moreover,
  due to \eqref{assumption2}, 
   %for each $0\leq \epsilon <\epsilon _0$, 
   we have
\begin{equation}\label{eq:branchcontraction}
 \sup _{\omega} \sup _x \left\vert  \frac{d}{dx}\left[ F(\omega) \right](x) \right\vert \leq \lambda ^{-1}. 
\end{equation}
Let $\mathscr C^0(\Omega , \tilde B)$ be the space of all continuous mappings from $\Omega $ to the closed interval $\tilde B\subset \R$, endowed with the usual $\mathscr C^0$ metric $d_{\mathscr C^0} ( \cdot ,\cdot ) $  defined by $d_{\mathscr C^0}(\phi _1 , \phi _2) = \sup _{\omega} \vert \phi _1(\omega ) -\phi _2(\omega) \vert $ for $\phi _1, \phi _2 \in \mathscr C^0(\Omega , \tilde B)$. 
%This is a complete metric since $D$ is a closed interval which does not include  $0$ with diameter $\leq \delta _0<1/2$. \footnote{CITATION? or just delete}
%, and $\mathscr M(\Omega , D)$ the  completion of $\mathscr C(\Omega , D)$  with respect to $\Vert \varphi \Vert$. \footnote{If $p _\epsilon$ can be continuous, somehow redundant argument?} 
For each $\varphi \in \Ci ^0 (\Omega ,\tilde B)$, we define a mapping $\mathcal G (\varphi) :\Omega \to \tilde B$ by
\[
 \mathcal G (\varphi )  (\omega) = F( \omega) \left( \varphi (\theta  \omega)\right) , \quad \omega \in \Omega.
\]
(Note that $ \mathcal G (\varphi )  (\omega) $ is indeed in $\tilde B$ since  $\tilde B\subset \tilde f_\epsilon (\omega)(\tilde B)$ for each $\omega \in \Omega$.)
It is straightforward to see that $ (\omega ,x) \mapsto F(\omega )(x) $ is a continuous mapping from $\Omega \times \tilde B$ to $\tilde B$ (see the argument below \eqref{eq:assumption}).
Thus,  it follows from the continuity of $\theta$ and $\varphi$ that
 $\mathcal G (\varphi):\Omega \to \tilde B$ is also a continuous mapping.
 (The transformation $\mathcal G :\Ci ^0 (\Omega ,\tilde B) \to \Ci ^0 (\Omega ,\tilde B)$ is called the \emph{graph transformation} induced by $F$.)
  %\footnote{CITATION}) 
Furthermore, by virtue of \eqref{eq:branchcontraction} together with the mean value theorem, we have 
\begin{align*}
d_{\Ci ^0} (\mathcal G (\phi _1) ,\mathcal G (\phi _2) )
&\leq \sup _{\omega\in\Omega}  \left\vert F(\omega) (\phi _1(\theta \omega)) -F(\omega) (\phi _2(\theta \omega))\right\vert \\
&\leq \lambda ^{-1} \sup _{\omega\in\Omega} \left\vert \phi _1(\theta \omega) - \phi _2(\theta \omega)\right\vert =
\lambda ^{-1} d_{\Ci ^0} ( \phi _1 ,\phi _2), 
\end{align*}
for all $ \phi _1,\phi _2 \in \Ci ^0 (\Omega ,\tilde B)$, 
i.e.,  $\mathcal G$ is a contraction mapping on the complete metric space $\Ci ^0(\Omega ,\tilde B)$.  Therefore,  there exists a unique fixed point $p$ of $\mathcal G$. By construction, $p:\Omega \to \tilde B \subset \R$ is an $\tilde f_\epsilon$-invariant continuous mapping.

We next construct a sequence $\{  h_n \} _{n\geq 0}$ of  continuous mappings $h_n :\Omega \to \mathrm{Homeo}(\Sone , \Sone)$, which shall uniformly converge to  the desired mapping $h$ as $n$ goes to infinity. 
%We construct  $h_n (\omega) $ as the projection of a continuous, piecewise linear function $\tilde h_n :\Sone  \to \R$ (i.e., $h_n(\omega) = \pi \circ \tilde  h_n$ where $\pi :\R \to \Sone$ is the natural projection) induced by a correspondence between finite sets $\{ a^n _j (\omega) \} \subset \Sone$, $n\geq 0$, $$, 
%, and $\tilde h_n$ will be contruction  is defined 
Since $\tilde f _\epsilon (\omega) :\R \to \R$  for $\omega \in \Omega$ is a lift of the orientation-preserving $k$-covering $f_\epsilon (\omega)$ of $\Sone$ and $\tilde f_\epsilon (\omega )( p (\omega)) =p (\theta \omega)$, we have $\tilde f_\epsilon (\omega )(p (\omega)+1) =p(\theta \omega) +k$ and  $\tilde f_\epsilon (\omega) :[p(\omega) ,p(\omega) +1] \to [p(\theta \omega) ,p(\theta \omega) +k]$ is a monotonically increasing homeomorphism for all $\omega$. Now we  define points $ a _j^n (\omega) $ in $[p(\omega),p(\omega) +1)$ for $n\geq 0, 0\leq j\leq k^n-1$ and $\omega \in \Omega$, inductively  with respect to $n$. 
In the case $n=0$, let $a_0^0(\omega) =p(\omega)$ for each $\omega \in \Omega$. For given integer $n\geq 0$, we  assume that  $a^n _j(\omega)$'s are well defined  for each $\omega \in \Omega$ and $0\leq j\leq k^n-1$. Then we define $a^{n+1} _j(\omega)$ for $0\leq j\leq k^{n+1} -1$ of the form  $j=\ell  \cdot k^n +j_1$ with integers $0\leq \ell \leq k-1$ and $0\leq j _1\leq k^{n} -1$ by
\begin{equation}\label{eq:defofa}
a_j ^{n+1} (\omega)=F(\omega)(a^n _{j_1 } (\theta \omega) +\ell ),\quad \omega \in \Omega.
\end{equation}
Setting the notation $\bar s=\sum _{i=0}^{n-1}s _i \cdot k^i$ for $s =(s _0,s _1,\ldots ,s _{n-1}) \in \mathcal A^n$ and $F_\ell (\omega)=F(\omega)(\cdot + \ell)$,
the explicit form of $a^n_{\bar s}(\omega)$ with $s\in \mathcal A^n$ and $\omega \in \Omega$ is
\begin{equation}\label{eq:revisa1}
a_{\bar s}^n(\omega) =F_{s _{n-1}} (\omega)\circ F_{s _{n-2}}(\theta \omega) \circ \cdots \circ F_{s_0}(\theta ^{n-1}\omega)(p(\theta ^n\omega)).
\end{equation}
%(Recall that $a_0^0(\theta ^n\omega) =p(\theta ^n\omega)$.) 
We denote the composition of mappings in the right-hand side of \eqref{eq:revisa1} by $F^{(n)}_s (\omega)$, i.e., $a_{\bar s}^n(\omega) = F^{(n)}_s (\omega)(p(\theta ^n\omega))$.  
Then it is straightforward to see that 
$F_s^{(n+m)}(\omega) =F_{[s]_m^{n+m-1}}^{(n)}(\omega) \circ F_{[s]_0^{m-1}}^{(m)}(\theta ^n\omega)$ for each integer $m\geq 0$, $s\in \mathcal A^{n+m}$ and $\omega \in \Omega$.
Therefore,  
for any $s\in \mathcal A^n$,  integer $m\geq 0$ and  $\omega \in \Omega$, 
\begin{align}\label{eq:revisa2}
a^{n+m}_{k^m \cdot \bar s} (\omega) &=F_{s}^{(n)}(\omega) \circ F_{(00\ldots 0)}^{(m)}(\theta ^n\omega) (p(\theta ^{n+m}\omega))\\
&= F_{s}^{(n)}(\omega) (p(\theta ^n\omega))= a^n _{\bar s} (\omega).
\end{align}
(Note that $k^m\cdot \bar s = \bar t$ with the concatenation $t$ of two words $(00\ldots 0) \in \mathcal A^m$ and $s$,  and that $F_0(\omega)(p(\theta \omega)) =p(\omega)$ for any $\omega \in \Omega$.)
For convenience, we set $a^n_{k^n}(\omega) =p(\omega) +1$ for each $\omega \in \Omega$. 

For the time being, we fix $n\geq 0$ and $\omega \in \Omega$. Then it follows from the monotonicity of $\tilde f_\epsilon (\omega)$ in \eqref{eq:defofa} that $a_0^n(\omega)<a_1^n(\omega) <\cdots <a_{k^n}^n(\omega)$. 
%the sequence $\{ a_j^n(\omega) \} _{0\leq j\leq k^n}$ is monotonically increasing for each $n\geq 0$ and $\omega \in \Omega$. 
Thus  we can define a continuous, monotonically increasing, piecewise linear mapping $\tilde h_n (\omega) :[0,1] \to [p(\omega) ,p(\omega) +1]$ such that
\begin{equation}\label{eq:defofa2}
\tilde h_n(\omega) \left( \frac{j}{k^n} \right) = a_j^n(\omega), \quad 0\leq j\leq k^n ,
\end{equation}
and  that $\tilde h_n(\omega) :[j/k^n ,(j+1)/k^n] \to [a_j^n (\omega) ,a_{j+1} ^n (\omega)]$ is an affine  mapping of slope $k^n \cdot (a^n _{j+1} (\omega) -a_j^n(\omega))$.
%,  then $\tilde h_n(\omega):[0,1] \to [p(\omega) ,p(\omega) +1]$
%is monotonically increasing for each $n\geq 0$ and $\omega \in \Omega$. 
We finally define $h_n(\omega ): \Sone \to \Sone$  by 
\begin{equation}\label{eq:defofa3}
h_n (\omega)(x) =\pi _{ \Sone} \circ \tilde h_n (\omega)(\tilde x) , \quad x\in \Sone,
\end{equation}
where $\tilde x$ is  a representative of $x$ in $[0,1]$. (This is well-defined since $\tilde h_n(\omega)(1) =\tilde h_n(\omega)(0) +1$.)
%the natural projection from $\R$ to $\Sone$. 

Now we change $n\geq 0$   while $\omega $ is still fixed. 
For any nonnegative integers $n$, $m$ and $s\in \mathcal A^n$, it follows from \eqref{eq:revisa2} and \eqref{eq:defofa2} that
\[
\tilde h_{n+m}(\omega) \left(\frac{\bar s}{k^n}\right) = \tilde h_{n+m} (\omega)  \left(\frac{k^m\cdot \bar s}{k^{n+m}}\right) =a_{k^m \cdot \bar s}^{n+m}(\omega) =a_{\bar s}^n(\omega) = \tilde h_n (\omega) \left(\frac{\bar s}{k^n}\right) .
\]
Hence, by the monotonicity of $\tilde h_n(\omega)$ and $\tilde h_{n+m}(\omega)$, the supremum norm of $ h _n(\omega)(x) -h_{n+m}(\omega)(x)$ (over $x\in \Sone$) is bounded by 
\begin{equation}\label{eq:revisa3}
\max _{0\leq j\leq k^n-1} \left\vert a^n_{j+1} (\omega) -a^n_j(\omega) \right\vert .
\end{equation}
On the other hand,  since $[a_j^n(\omega) ,a _{j+1} ^n(\omega)]$ is diffeomorphically mapped onto $[p(\theta ^n\omega) +j,p(\theta ^n\omega)+j+1]$ by $\tilde f^{(n)}_\omega$,   the length of each $[a_j^n(\omega) ,a _{j+1} ^n(\omega)]$ does not exceed $\lambda ^{-n}$ (independently of $\omega$) by  virtue of \eqref{assumption2}, so that 
$\{\pi_{\Sone}( a^n _j (\omega) )\mid n\geq 0 , 0\leq j\leq k^n \}$ is dense in $\Sone$ and \eqref{eq:revisa3} uniformly converges to zero as $n$ goes to infinity with respect to $\omega$.
 Therefore,  $\{h_n \} _{n\geq 0}$ is a Cauchy sequence of continuous mappings from $\Omega$ to $\mathrm{Homeo} (\Sone ,\Sone )$ and there exists  the limit mapping $ h=\lim _{n\to \infty} h_n$, which is also by construction  a continuous mapping from $\Omega$ to $\mathrm{Homeo} (\Sone ,\Sone )$.
 %homeomorphism  on the circle. 
 %Moreover,
%
%We next check the measurability of $h:\Omega \to \mathrm{Homeo}(\Sone ,\Sone)$. 
%Reiterating the argument below \eqref{eq:assumption} on the continuity of $\Omega \times \Sone \ni (\omega ,x) \mapsto f_\epsilon (\omega ,x)$, it is deduced that the function
%$\Omega \times \R \ni (\omega ,x) \mapsto (\tilde f_\epsilon (\omega) )^{-1} (x)$
%is continuous. 
%Thus, by the virtue of 
%the continuity of $p:\Omega \to \Sone$ and $\theta :\Omega \to \Omega$  together with \eqref{eq:defofa},   $a^n _j :\Omega \to \R$ is a continuous function for each $n\geq 0$ and $0\leq j\leq k^n $. This gives  the continuity (measurability, in particular) of $h_n:\Omega \to \mathrm{Homeo} (\Sone ,\Sone)$ for each $n\geq 0$, and finally we get the measurability of $h :\Omega \to \mathrm{Homeo}(\Sone ,\Sone)$.

We see the conjugacy \eqref{eq:conjugacy} between $f_\epsilon $ and $E_k$. 
Let $\omega \in \Omega$, $n\geq 0$ and $0\leq j\leq k^{n+1} -1$ of the form  $j=\ell  \cdot k^n +j_1$ with some integers $0\leq \ell \leq k-1$ and $0\leq j _1\leq k^{n} -1$. Then on the one hand, 
in view of \eqref{eq:defofa}, \eqref{eq:defofa2} and \eqref{eq:defofa3}, we have
\[
f_\epsilon (\omega) \circ h (\omega) \circ \pi _{\Sone}\left( \frac{j}{k^{n+1}} \right) = \pi _{\Sone} \circ \tilde f_\epsilon (\omega) (a_{j}^{n+1} (\omega)) = \pi _{\Sone} \left(a_{j_1} ^n(\theta \omega)\right).
\]
On the other hand, 
\[
h(\theta \omega) \circ E_k \circ \pi _{\Sone}\left( \frac{j}{k^{n+1}} \right) = h(\theta \omega )\circ \pi _{\Sone} \left( \frac{j _1}{k^{n}} \right) = \pi _{\Sone} \left( a_{j_1} ^n(\theta \omega) \right).
\]
Since $\{ \pi _{\Sone}(j /k^{n}) \mid n\geq 0, 0\leq j\leq k^n -1\} $ is  dense in $\Sone$, we get \eqref{eq:conjugacy}.

In the end, we prove  the estimate  \eqref{eq:knear3} of $h$ by showing  
\begin{equation}\label{eq:knear4}
\vert a _j^n (\omega ) - a_j^n (\omega ^\prime)  \vert \leq \delta _0 
\end{equation}
for all $n\geq 0$, $0\leq j\leq k^n -1$ and $\omega , \omega ^\prime \in \Omega$.
We   show \eqref{eq:knear4}  by induction with respect to $n \geq 0$.
Due to \eqref{eq:knear2}, this inequality holds for $n=0$. Suppose that \eqref{eq:knear4} is true for given $n\geq 0$. Let $\omega, \omega ^\prime \in \Omega$ and $0\leq j\leq k^{n+1} -1$ of the form $j=\ell  \cdot k^n +j_1$ with some integers $0\leq \ell \leq k-1$ and $0\leq j _1\leq k^{n} -1$.  
Then, it follows from \eqref{eq:defofa} together with the triangle inequality that
$
\vert a _j^{n+1} (\omega ) - a_j^{n+1} (\omega ^\prime)  \vert 
$ is bounded by
$S_1 +S_2$, 
where
\begin{multline*}
S_1 =\left\vert F_\ell(\omega)(a^n _{j_1 } (\theta \omega)  ) -F_\ell(\omega^\prime)(a^n _{j_1 } (\theta \omega)  ) \right\vert ,\\
S_2= \left\vert F_\ell(\omega^\prime)(a^n _{j_1 } (\theta \omega) ) -F_\ell(\omega^\prime)(a^n _{j_1 } (\theta \omega ^\prime)  )\right\vert .
\end{multline*}
To estimate $S_1$, we let
$
x=F_\ell(\omega)(a^n _{j_1 } (\theta \omega) )$ and $x^\prime =F_\ell(\omega^\prime)(a^n _{j_1 } (\theta \omega)  ) .
$
%for the first and second term in the absolute value of  $s_1$, respectively. 
Then, $\tilde f_\epsilon (\omega ^\prime) (x^\prime) = \tilde f_\epsilon (\omega ) (x) $, and in view of  \eqref{eq:knear} we have
\[
\left\vert \tilde f_\epsilon (\omega ^\prime) (x) - \tilde f_\epsilon (\omega ^\prime) (x^\prime) \right\vert = \left\vert \tilde f_\epsilon (\omega ^\prime) (x) - \tilde f_\epsilon (\omega ) (x) \right\vert  \leq \eta.
\]
Hence, it follows from \eqref{assumption2} and the mean value theorem that 
\[
S_1=\vert x-x^\prime \vert \leq \lambda ^{-1} \left\vert \tilde f_\epsilon (\omega ^\prime) (x) - \tilde f_\epsilon (\omega ^\prime) (x^\prime) \right\vert  \leq \lambda ^{-1} \eta .
\]
On the other hand, by  virtue of \eqref{eq:branchcontraction} and the mean value theorem together with the hypothesis of induction, 
%\footnote{Better to write "for $n=m$"..?} 
we get
\[
S_2 \leq  \lambda ^{-1} \left\vert (a^n _{j_1 } (\theta \omega) +\ell ) -(a^n _{j_1 } (\theta \omega ^\prime) +\ell )\right\vert \leq \lambda ^{-1} \delta _0.
\]
These estimates together with the condition \eqref{eq:aaas} on $\eta$ and $\delta _0$ leads to that
$
\vert a _j^{n+1} (\omega ) - a_j^{n+1} (\omega ^\prime)  \vert  \leq 
\delta _0$, which completes the proof of the claim \eqref{eq:knear4}.
%\footnote{Need 2 of $\eta /2$?}
\end{proof}

%
%
%\begin{rem}
%It is 
%In a similar manner to the argument in the proof above, we can see that $\omega \mapsto (h(\omega))^{-1} (A)$ is measurable mapping from $\Omega$ to $\mathcal F(\Sone)$ for all $A\in \mathcal F(\Sone)$. Indeed, it is straightforward to get the claim with $h$ replaced by  $h_n:\Omega \to \mathrm{Homeo} (\Sone ,\Sone)$ for any $n\geq 0$, so the claim immediately follows. Therefore, if we denote the pushforward of given measure $\nu$ on $\Sone$ under $h(\omega)$ by $(h(\omega))_*\nu$, i.e.,
%\[
%\left[(h(\omega))_*\nu \right] (A) = \nu ((h(\omega))^{-1}A), \quad A\in \mathcal B (\Sone),
%\] 
%then $\omega \mapsto \left[(h(\omega))_*\nu \right] (A) $ is a measurable mapping from $\Omega $ to $\mathbb R$ for all $A\in \mathcal B(\Sone)$, where $\mathcal B$
%\end{rem}
%
%
%
%

\begin{rem}\label{rmk:3}
It is  desirable to see if any topological condition  on   perturbations  (in our case, e.g., the continuity of $\theta$) is removable. This boils down to whether the random conjugacy $h:\Omega \to \mathrm{Homeo} (\Sone ,\Sone)$ is measurable without the topological condition.  
However it is unclear to us whether  requiring our topological setup is due to  a substantial obstacle or is an artifact of our construction.
\end{rem}

We need the following proposition for a random Markov partition.
\begin{defn}
Let $f :\Omega \to \mathscr C^r(\Sone ,\Sone)$ be a continuous mapping and denote $f(\omega)$ by $f_\omega$. We say that a finite collection $\{ \mathcal I ^{(\cdot) } _j\} _{j\in \mathcal A}$ of continuous mappings  defined on $\Omega$ with values in $\mathcal F(\Sone)$ is a \emph{Markov partition} of $f$ 
if it satisfies the following conditions:
\begin{itemize}
\item $\mathcal I^\omega _j$ is  a nonempty left-closed and right-open interval
% (i.e.,  of the  form $[a,b)$ with  $a <b$)
  for each $j\in \mathcal A$ and $\omega \in \Omega$,
\item $ \bigsqcup _{j \in \mathcal A} \mathcal I ^\omega _j =\Sone$ for  every $\omega \in \Omega$,
\item $f _\omega (\mathcal I^\omega _j) =\Sone$ and $f_\omega : \mathcal I ^\omega _j \to \Sone$ is a $\Ci ^r$ diffeomorphism  for each $j\in \mathcal A$ and  $\omega \in \Omega$,
\item $(f_\omega )^{-1} (\mathcal I_i ^{\theta \omega}) \cap \mathcal I ^\omega _j$ is a nonempty left-closed and right-open interval for each $i , j\in \mathcal A$ and   $\omega \in \Omega$.
%\footnote{Comment on the fifth condition?} 
\end{itemize}
\end{defn}

\begin{prop}\label{cor:randomMarkov}
Suppose that $0\leq \epsilon <\epsilon _0$. Then there is a Markov partition $\{ \mathcal I _j ^{(\cdot)} \} _{j=0} ^{k-1}$ of $f_\epsilon$ such that for each $0\leq j\leq k-1$, we can find a nonempty open interval $J^\prime$ such that $\mathcal I^\omega _j$ does not intersect $J^\prime$ for  every $\omega \in \Omega$.
\end{prop}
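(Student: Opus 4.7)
My plan is to pull back the standard Markov partition of the $k$-folding map $E_k$ through the random conjugacy $h$ supplied by Theorem~\ref{prop:randomshub}, by setting
\[
\mathcal I_j^\omega := h(\omega)\bigl([j/k,\,(j+1)/k)\bigr),\qquad j\in\mathcal A,\ \omega\in\Omega.
\]
Each $h(\omega)$ is an orientation-preserving homeomorphism of $\Sone$ (inherited from the monotone piecewise-linear approximants $\tilde h_n$ of the preceding proof), so each $\mathcal I_j^\omega$ is a nonempty left-closed right-open interval and the collection partitions $\Sone$; continuity of $\omega\mapsto\mathcal I_j^\omega$ into $(\mathcal F(\Sone),d_H)$ is immediate from the continuity of $h$.

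The dynamical Markov conditions all follow mechanically from the intertwining $h(\theta\omega)\circ E_k=f_\omega\circ h(\omega)$ (writing $f_\omega$ for $f_\epsilon(\omega)$). Since $E_k$ restricts to a $\Ci^r$-diffeomorphism from $[j/k,(j+1)/k)$ onto $\Sone$, post-composing with $h(\theta\omega)$ yields $f_\omega(\mathcal I_j^\omega)=\Sone$ and the bijectivity of $f_\omega|_{\mathcal I_j^\omega}$, which together with the smoothness of $f_\omega$ and \eqref{assumption2} upgrades to a $\Ci^r$-diffeomorphism. The cylinder condition is analogous: pulling back through $h(\omega)$ one finds
\[
(f_\omega)^{-1}\bigl(\mathcal I_i^{\theta\omega}\bigr)\cap\mathcal I_j^\omega \;=\; h(\omega)\bigl([j/k+i/k^2,\,j/k+(i+1)/k^2)\bigr),
\]
again a nonempty left-closed right-open interval.

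The critical step is producing $J'$. For fixed $j$ and any reference $\omega_0\in\Omega$, estimate~\eqref{eq:knear3} says that the endpoints $h(\omega)(j/k)$ and $h(\omega)((j+1)/k)$ each stay within $\delta_0$ of their values at $\omega_0$ as $\omega$ varies; combined with the orientation-preservation of $h$, this forces $\mathcal I_j^\omega$ into the $\delta_0$-neighborhood of $\mathcal I_j^{\omega_0}$, so $\bigcup_{\omega\in\Omega}\mathcal I_j^\omega$ is contained in an arc of length at most $|\mathcal I_j^{\omega_0}|+2\delta_0$. Since $f_\omega$ maps $\mathcal I_j^{\omega_0}$ diffeomorphically onto $\Sone$ with derivative $\ge\lambda$, we have $|\mathcal I_j^{\omega_0}|\le\lambda^{-1}$; and the bound $\lambda\le k$ (forced by $\int_\Sone f_\omega'\,dx=k$) combined with the freedom to take $\delta_0$ arbitrarily small within $(0,(1-1/k)/2)$ lets us arrange $\lambda^{-1}+2\delta_0<1$. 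The complement then contains a nonempty open sub-arc, which we take as $J'$.

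The main technical point is this last length accounting: the Hausdorff inflation of each $\mathcal I_j^\omega$ by $\delta_0$ on both sides could in principle push $\bigcup_\omega\mathcal I_j^\omega$ close to all of $\Sone$, so one must check $\lambda^{-1}+2\delta_0<1$; this is a slightly stronger constraint on $\delta_0$ than the one stated in the preamble, but it is compatible, since $\lambda\le k$ forces $(\lambda-1)/(2\lambda)\le(1-1/k)/2$.
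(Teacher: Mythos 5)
Your construction is exactly the paper's: pull back the standard partition $\{I_j\}$ of $E_k$ through the random conjugacy $h$ of Theorem~\ref{prop:randomshub}, and verify the Markov conditions from the intertwining relation \eqref{eq:conjugacy}; all of that matches. The interesting divergence is in the final length accounting. The paper simply asserts
$\sup_{\omega,\omega'} d_H(\mathcal I_j^\omega,\mathcal I_j^{\omega'}) \le \tfrac1k + 2\delta_0 < 1$
and says this ``immediately implies the conclusion,'' but a bound on the \emph{pairwise} Hausdorff distance by itself does not control $\bigcup_\omega \mathcal I_j^\omega$ (small pairwise Hausdorff distance is compatible with the union being all of $\Sone$); one additionally needs to bound $|\mathcal I_j^{\omega_0}|$ for one reference $\omega_0$, and the appearance of $1/k$ there reads as if $|\mathcal I_j^\omega|\le 1/k$, which is not justified — the images $h(\omega)(I_j)$ generally have lengths different from $1/k$. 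You supply exactly the missing ingredient, $|\mathcal I_j^{\omega_0}|\le\lambda^{-1}$ (a diffeomorphism onto $\Sone$ with derivative $\ge\lambda$), so the containing arc has length $\le \lambda^{-1}+2\delta_0$, and you correctly observe this forces the \emph{slightly stronger} constraint $\delta_0 < (\lambda-1)/(2\lambda)$, which is compatible with, and implies, the preamble's $\delta_0 < (1-1/k)/2$ because $1<\lambda\le k$. Shrinking $\delta_0$ only shrinks $\eta$, $B$, and the admissible range of $\epsilon$, so nothing else in the paper breaks. In short: same construction, but your version of the last step is the one that actually closes the argument, at the small price of a tighter choice of $\delta_0$ that should really be made already in the preamble.
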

\begin{proof}
 Let $I_j =\pi _{\Sone}([j/k ,(j+1)/k))$ for $0\leq j\leq k-1$. Then $\{ I_j\} _{j=0}^{k-1}$ is a Markov partition of the  $k$-folding map $E_k$. I.e., 
\begin{itemize}
\item $ I_j$ is  a left-closed and right-open interval for each $j\in \mathcal A$,
\item $ \bigsqcup _{j\in \mathcal A}  I  _j =\Sone$,
\item $E_k ( I_j) =\Sone$ and $E_k:  I  _j \to \Sone$ is a $\Ci ^r$ diffeomorphism,
\item $(E_k)^{-1} ( I_i ) \cap I _j$ is a left-closed and right-open interval for each $i , j\in \mathcal A$.
\end{itemize}
We fix $0\leq \epsilon <\epsilon _0$, and let $h:\Omega \to \mathrm{Homeo}(\Sone ,\Sone)$ be the continuous mapping satisfying \eqref{eq:conjugacy} in Theorem \ref{prop:randomshub}. Set $\mathcal I^\omega _j =h(\omega) (I_j)$ for each $\omega \in \Omega$ and $0\leq j\leq k-1$. Then due to Theorem \ref{prop:randomshub}, it is straightforward to see that  $\{ \mathcal I^{(\cdot)} _j\} _{j=0}^{k-1}$ is a Markov partition of $f_\epsilon$ such that 
\[
\sup _{(\omega ,\omega ^\prime) \in \Omega ^2} d_H(\mathcal I_j^\omega ,\mathcal I_j^{\omega ^\prime}) \leq \frac{1}{k}+2\delta _0 <1
\]
for each $0\leq j\leq k-1$, where the last inequality follows from the condition of $\delta _0$ given above \eqref{eq:aaas}. This immediately implies the conclusion.
\end{proof}

\subsection{Coding of graphs}\label{subsection:coding}
Throughout this subsection, we fix $0\leq \epsilon <\epsilon _0$, and let $\{ \mathcal I^{(\cdot)} _j\} _{j=0}^{k-1}$ be the  Markov partition of $f_\epsilon$  constructed in the proof of Proposition \ref{cor:randomMarkov}.
For a word $s =(s_0 ,s_1 ,s_2, \ldots s_{n-1}) \in \mathcal A^n$ and $\omega \in \Omega$, let $ \mathcal I ^\omega _ s$ be a  subset of $\Sone$ defined by
\begin{equation}\label{eq:defofi}
\mathcal I ^\omega  _s  = \bigcap ^{n-1} _{j=0} \left(f^{(j)} _\omega \right)^{-1} \left(\mathcal I  ^{\theta ^j \omega} _{s_j}\right) .
\end{equation}
\begin{lem}\label{lem:1}
For each $n\geq 1$ and  $s  \in \mathcal A^n$,  $\mathcal  I^\omega _s$ is a nonempty left-closed and right-open  interval of $\Sone$ for every $\omega \in \Omega$.
Moreover, the mapping $\omega \mapsto \mathcal I_s ^\omega $ from $\Omega$ to $\mathcal F(\Sone)$ is continuous (in particular, measurable).
\end{lem}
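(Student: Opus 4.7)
The plan is to express $\mathcal I^\omega_s$ as the image under the random conjugacy $h(\omega)$ from Theorem~\ref{prop:randomshub} of a fixed (i.e.\ $\omega$-independent) cylinder set for the folding map $E_k$, and then read off both assertions directly from the properties of $h$.

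First I would iterate the conjugacy~\eqref{eq:conjugacy} to obtain $h(\theta^j\omega) \circ E_k^{\,j} = f^{(j)}_\omega \circ h(\omega)$ for every $j \geq 0$, and combine it with the identity $\mathcal I^{\theta^j\omega}_{s_j} = h(\theta^j\omega)(I_{s_j})$ coming from the construction in the proof of Proposition~\ref{cor:randomMarkov}. Because $h$ takes values in $\mathrm{Homeo}(\Sone,\Sone)$, inverting and cancelling at the level of set preimages yields $(f^{(j)}_\omega)^{-1}(\mathcal I^{\theta^j\omega}_{s_j}) = h(\omega)\bigl(E_k^{-j}(I_{s_j})\bigr)$, and pulling $h(\omega)$ out of the intersection in~\eqref{eq:defofi} gives
\[
\mathcal I^\omega_s = h(\omega)(C_s), \qquad C_s := \bigcap_{j=0}^{n-1} E_k^{-j}(I_{s_j}).
\]
A routine computation identifies $C_s$ with the single left-closed right-open interval $\pi_{\Sone}\bigl([\bar s/k^n,(\bar s+1)/k^n)\bigr)$ of length $k^{-n}$, where $\bar s = \sum_{i=0}^{n-1} s_i k^i$; crucially, $C_s$ no longer depends on $\omega$.

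Once this formula is in place, both conclusions are essentially immediate. For the first, the approximants $h_n(\omega)$ built in the proof of Theorem~\ref{prop:randomshub} are monotonically increasing piecewise-linear homeomorphisms, hence their uniform limit $h(\omega)$ is an orientation-preserving homeomorphism of $\Sone$; in particular $h(\omega)(C_s)$ is a nonempty left-closed right-open interval of $\Sone$. For continuity in the Hausdorff metric I would invoke the elementary estimate
\[
d_H\bigl(h(\omega)(C_s), h(\omega')(C_s)\bigr) \leq d_{\mathscr C^0}\bigl(h(\omega), h(\omega')\bigr)
\]
together with the $\mathscr C^0$-continuity of $\omega \mapsto h(\omega)$ furnished by Theorem~\ref{prop:randomshub}. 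No step seems to present a genuine obstacle: all the analytic content has already been absorbed into Theorem~\ref{prop:randomshub} and Proposition~\ref{cor:randomMarkov}, and the essential point is simply that the randomness in $\mathcal I^\omega_s$ is concentrated entirely in the single factor $h(\omega)$.
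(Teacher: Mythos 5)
Your proof is correct and uses the same central decomposition as the paper: the conjugacy~\eqref{eq:conjugacy} pulls $h(\omega)$ out of the intersection~\eqref{eq:defofi}, yielding $\mathcal I^\omega_s = h(\omega)(I_s)$ with an $\omega$-independent cylinder $I_s = C_s$. For the first assertion the paper proves by induction that $I_s$ is a nonempty left-closed right-open interval, whereas you identify it explicitly as an $E_k$-cylinder; both are fine (although your formula uses the paper's $\bar s=\sum_i s_ik^i$, while the base-$k$ expansion actually gives the reversed integer $\sum_i s_i k^{n-1-i}$ --- a harmless indexing slip in an aside). The genuine divergence is in the continuity step. The paper establishes the exact formula $d_H\bigl(h(\omega)(I_s),h(\omega')(I_s)\bigr)=\max_{x\in\partial I_s} d_{\Sone}\bigl(h(\omega)(x),h(\omega')(x)\bigr)$, which on the circle requires the careful $\kappa$-smallness bookkeeping, lifts to $\R$, and the midpoint argument to rule out wrap-around. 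You instead invoke only the one-sided bound $d_H\bigl(g_1(S),g_2(S)\bigr)\le d_{\mathscr C^0}(g_1,g_2)$, which holds for arbitrary maps and arbitrary sets with a two-line verification and needs none of that machinery. Since only the upper bound is actually used, your route is cleaner; what the paper's extra work buys is the sharper equality, which is not needed here.
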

\begin{proof}
Let $n\geq 1$, $s\in\mathcal A^n$ and $\omega \in \Omega$ be given. 
Recall that   $\mathcal I  ^{\theta ^j \omega} _{s_j} = h(\theta ^j \omega) (I_{s_j})$ for each $0\leq j\leq n-1$ by construction. 
Then, 
it follows from Theorem \ref{prop:randomshub} that 
\[
 \left(f^{(j)} _\omega \right)^{-1} \left(\mathcal I  ^{\theta ^j \omega} _{s_j}\right) = h(\omega) \circ \left(E_k^j\right) ^{-1} \left( I_{s_j}\right).
\]
Therefore, due to that $h(\omega)$ is a homeomorphism on $\Sone$, we have
\begin{equation}\label{eq:2rev1}
\mathcal I^\omega _s =h(\omega) \left(I_s\right),\quad \text{where $I_s=\bigcap ^{n-1} _{j=0} \left(E_k^j \right)^{-1} \left( I  _{s_j}\right)$}.
\end{equation}

We next prove that $I  _s$ is a nonempty left-closed and right-open interval (and so is $\mathcal I^\omega _s$ for each $\omega \in \Omega$, due to \eqref{eq:2rev1}) by induction. It is obviously true for $n=1$.  
Assume that the claim is true for a positive integer $n$, i.e., the set $I _s$ is a nonempty left-closed and right-open interval for  every  $s \in \mathcal A^n$. 
Then, for  every   $s =(s_0, s_1, \ldots , s_n) \in \mathcal A^{n+1}$, we have 
\begin{align}\label{eq:23}
I _s &= I_{s_0} \cap \left( \left( E_k\right) ^{-1} \left( \bigcap _{j=1} ^n \left( E_k^{j-1} \right) ^{-1} \left(I _{s_j} \right) \right) \right) \\
&= I _{s_0} \cap \left( \left( E_k \right) ^{-1} \left( I _{\sigma (s)} \right) \right) ,
\end{align}
where $\sigma :\mathcal A^{n+1} \to \mathcal A^n$ is the one-sided shift defined by $\sigma (s) =(s_1, s_2 ,\ldots ,s_n)$ for $s=(s_0, s_1, \ldots ,s_n) \in \mathcal A^{n+1}$. Note that $E_k: I_{s_0}  \to \Sone$ is a diffeomorphism, and  $I _{\sigma (s)}$ is a nonempty  left-closed and right-open  interval by the hypothesis of induction. Hence, $I_s$ is also a nonempty  left-closed and right-open  interval, and we complete the proof of the claim.

We fix $n\geq 1$ and $s\in\mathcal A^n$. We finally prove the continuity of the mapping $\omega \mapsto h(\omega)(I  _s)$
 from $\Omega$ to $\mathcal F(\Sone)$, 
which implies the continuity of  $\omega \mapsto \mathcal I ^\omega _s$ by  \eqref{eq:2rev1}.
%Let $d_\Omega (\cdot ,\cdot)$ be the metric of $\Omega$.
Fix $\omega \in \Omega$, and let $\kappa =\kappa (\omega)>0$ be a real number such that $\kappa <1-\vert h(\omega)(I_s)\vert $.
By the continuity of $h:\Omega \to \mathrm{Homeo}(\Sone , \Sone)$ at $\omega$, if $\omega ^\prime$ is sufficiently  close to $\omega$, then 
\begin{equation}\label{eq:2r567}
d_{\Ci ^0}(h(\omega) , h(\omega ^\prime)) < \min\left\{ \frac{\kappa}{2}, \frac{1-\kappa - \vert h(\omega)(I_s)\vert }{2}
\right\} .
\end{equation}
For each $\omega ^\prime$ satisfying  \eqref{eq:2r567}, it is easy to see that $\vert h(\omega ^\prime )(I_s) \vert < \vert h(\omega )(I_s) \vert +\kappa$ and 
\begin{equation}\label{eq:2r568}
d_{\Ci ^0}(h(\omega), h(\omega ^\prime)) <\min \left\{ \frac{1-\vert h(\omega )(I_s) \vert}{2}, \frac{1 - \vert h(\omega ^\prime)(I_s)\vert }{2}
\right\}.
\end{equation}
Let $\tilde I_s$ be an interval of $\R$ such that $\pi _{\Sone}(\tilde I_s) =I_s$. 
Let $\tilde h(\omega)$ and $\tilde h(\omega ^\prime)$  be the lifts on $\R$ of homeomorphisms $h(\omega)$ and $h(\omega ^\prime)$ on $\Sone$, respectively, such that $d_{\Ci^0}(\tilde h(\omega), \tilde h(\omega ^\prime))<1$. 
Note that $h(\omega)$ and $h(\omega ^\prime)$ are orientation-preserving  by construction in the proof of Theorem \ref{prop:randomshub}, and thus  $\tilde h(\omega)$ and $\tilde h(\omega ^\prime)$ are monotonically increasing.
We let $\zeta$ and $\zeta ^\prime$ denote the midpoints of $\tilde h(\omega)(\tilde I_s)$ and $\tilde h (\omega ^\prime)(\tilde I_s)$, respectively. 
Then, it is straightforward to see that \eqref{eq:2r568} implies $\tilde h(\omega^\prime) (\tilde I_s) \subset [\zeta -1/2, \zeta +1/2)$ and $\tilde h(\omega) (\tilde I_s) \subset [\zeta ^\prime -1/2, \zeta ^\prime+1/2)$. Therefore,
%
%It is straightforward to see that for each nonempty left-closed and right-open interval $I$,  if $h_0$ and $h_1$ are orientation-preserving  homeomorphisms on $\Sone$ 
%%and $d_{\Ci ^0}(h_0 ,h_1) $ is so small 
%such that $h_0(I) \cap h_1(I) \neq \emptyset$, 
%%and that both of $h_0$ and $h_1$ are orientation-preserving or orientation-reversing, 
%then 
 we have
%$h_0(I)$ and $h_1(I)$ are left-closed and right-open intervals having an intersection, and thus
\[
d_H (h(\omega)(I_s), h(\omega ^\prime)(I_s)) = \max_{x\in \partial I_s} d_{\Sone} (h(\omega)(x),h(\omega ^\prime)(x)),
\]
which is bounded by $d_{\Ci ^0}(h(\omega),h(\omega ^\prime))$, where $\partial I_s$ is the boundary of  $I_s$. 
%Therefore, for each $\omega  \in \Omega$, it follows from the continuity of $h:\Omega \to \mathrm{Homeo}(\Sone,\Sone)$ at $\omega $ that if $\omega ^\prime$ is sufficiently close $\omega $, then 
%%with the notation $d_\Omega$ for the metric of   $\Omega$, it follows from the continuity of $\omega \mapsto h(\omega)$ that for any $n\geq 1$ and $s\in \mathcal A^n$, there is a positive number $\delta _{n,s}>0$ such that if $\omega ,\omega ' \in \Omega$ with $d_\Omega (\omega ,\omega ')$, then
%\[
%d_H(h(\omega )(I_s), h(\omega ^\prime)(I_s)) \leq d_{\Ci ^0}(h(\omega ), h(\omega ^\prime)).
%\]
By using the continuity of $h:\Omega \to \mathrm{Homeo}(\Sone,\Sone)$  at $\omega $ again, 
%together with \eqref{eq:2rev1}, 
we obtain the continuity of $h(\cdot)(I_s)$ at $\omega$.  
%which comple
%  
%$\omega \mapsto (f^{(j)} _\omega )^{-1} (\mathcal I  ^{\theta ^j \omega} _{s_j}) 
%$ is the composition of a mapping $\alpha :\Omega \times \Omega \to \mathcal F (\Sone)$ given by 
%\[
%\alpha(\omega ,\omega ^{\prime})= \left(f^{(j)} _\omega \right)^{-1} (\mathcal I  ^{\theta ^j \omega ^\prime} _{s_j}) ,\quad (\omega ,\omega ^\prime ) \in \Omega \times \Omega
%\] 
%and a continuous mapping $\Lambda :\Omega \to \Omega \times \Omega$ given by $\Lambda (\omega)=(\omega ,\omega)$ for $\omega \in \Omega$. 
%Thus, in view of \eqref{eq:defofi}, it suffices to show the continuity of $\alpha :\Omega \times \Omega \to \mathcal F(\Sone)$. 
%
%We fix $n \geq 1$,   $s\in \mathcal A^n$ and  $0\leq j\leq n$. It follows from the continuity of $\Omega \times \Sone \ni (\omega ,x) \mapsto f_\epsilon (\omega ,x)$ and $\theta$ that for each $I \in \mathcal F (\Sone)$, the mapping $\omega \mapsto (f^{(j)} _\omega )^{-1} (I)$ is a continuous mapping from $\Omega $ to $\mathcal F (\Sone)$. Therefore $\omega \mapsto \alpha(\omega ,\omega ^\prime)$ is a continuous mapping from $\Omega$ to $\mathcal F (\Sone)$ for each $\omega ^\prime \in \Omega$.
%On the other hand, 
% the mapping $\Omega \ni \omega ^\prime \mapsto \alpha(\omega ,\omega ^\prime)$ is  continuous for each $\omega \in \Omega$ since $ (f^{(j)} _\omega )^{-1} :\mathcal F (\Sone) \to \mathcal F (\Sone)$ is continuous for each $\omega \in \Omega$ and $\mathcal I^{(\cdot)} _j$ is continuous for each $0\leq j\leq k-1$. 
Since the choice of $\omega \in \Omega$ is arbitrary,  $\omega \mapsto  h(\omega)(I _s)$ is a continuous mapping from $\Omega$ to $\mathcal F(\Sone)$.
\end{proof}

Let $\N _0$ be the set of all nonnegative integers $\{ 0, 1, \ldots\}$.
For a  sequence $s=(s_0,s_1, \ldots , s_m,s_{m+1},\ldots , s_{n},\ldots )\in \mathcal A^{\N _0}$ with integers $n\geq m\geq 0$, 
we  denote   $(s_m,s_{m+1},\ldots , s_{n})$   by $[s]_m^{n}$. 
We define  $\mathcal I_ s^\omega \in \mathcal F(\Sone)$ by
$ 
\mathcal I^\omega _s=\bigcap _{n\geq 0} \mathcal I_{[s]_0^n}^\omega .
$
Then,
since  the inverse branches of $f_\omega $ are contractions due to \eqref{assumption2},  it follows from Lemma \ref{lem:1} that  $\mathcal I_ s^\omega$ 
 is a point set. We denote the point by $X_s (\omega)$. Then, by the continuity in Lemma \ref{lem:1}, the mapping $\omega \mapsto \{X_s(\omega) \} $ from $\Omega$ to $\mathcal F(\Sone)$  is continuous, so is the mapping $X_s:\Omega \to \Sone$. 
 %Furthermore, it is easy to see that for $\mathbb P$-almost every $\omega \in \Omega$ and any $x \in \Sone$, we can find a unique sequence $s =(s_0, s_1,\ldots )\in \mathcal A^{\N _0}$ given by 
%\[
%s_j =i \quad \text{if} \quad f^{(j)} _\omega (x) \in \mathcal I ^{\theta ^j\omega} _{i},
%\]
%and  we  have  $x =X_s (\omega)$.
%\footnote{Can be removed?}

The following  lemmas on the graphs of $X_s$'s are simple but substantial in the proof of Theorem \ref{conj:1}. Let $\sigma :\mathcal A^{\mathbb N_0} \to  \mathcal A^{\mathbb N_0}$ be the one-sided shift given by $\sigma (s) =(s _1,s _2,\ldots)$ for $s =(s _0,s _1,\ldots) \in \mathcal A^{\mathbb N_0}$. 
\begin{lem}\label{lem:22}
For any $s\in \mathcal A^{\N _0}$, we have
\[
f_\omega (X_s(\omega)) =X_{\sigma (s)} (\theta \omega) , \quad \omega \in \Omega,
\]
and we can find a nonempty open interval $J^\prime $ such that $X_s(\omega)$ does not intersect $J^\prime$ for  every $\omega \in \Omega$.
%Furthermore, for $\mathbb P^2$-almost every $(\omega ,\omega ^\prime)$ we have  
%\[
%d_{\Sone} (X_s (\omega) , X_s (\omega ^\prime) ) \leq \frac{1}{k+1}. 
%\]
\end{lem}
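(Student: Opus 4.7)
The plan is to unwind definitions and check that $f_\omega(X_s(\omega))$ satisfies the intersection identity that singles out $X_{\sigma(s)}(\theta\omega)$, and then to invoke the ``hole'' property already secured in Proposition \ref{cor:randomMarkov}.

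For the first identity, I would argue directly from \eqref{eq:defofi}. By definition, $X_s(\omega)$ is the unique point of $\bigcap_{n\geq 1}\mathcal I^\omega_{[s]_0^{n-1}}$, which by \eqref{eq:defofi} means
\[
f^{(j)}_\omega(X_s(\omega))\in \mathcal I^{\theta^j\omega}_{s_j}\qquad \text{for every } j\geq 0.
\]
Applying this for $j\geq 1$ and using the cocycle identity $f^{(j)}_\omega = f^{(j-1)}_{\theta\omega}\circ f_\omega$, we get
\[
f^{(i)}_{\theta\omega}\bigl(f_\omega(X_s(\omega))\bigr)\in \mathcal I^{\theta^{i+1}\omega}_{s_{i+1}}=\mathcal I^{\theta^i(\theta\omega)}_{(\sigma(s))_i}\qquad \text{for every } i\geq 0.
\]
Thus $f_\omega(X_s(\omega))\in \mathcal I^{\theta\omega}_{[\sigma(s)]_0^{i}}$ for all $i\geq 0$, and taking the intersection over $i$ we see that $f_\omega(X_s(\omega))$ belongs to $\mathcal I^{\theta\omega}_{\sigma(s)}$. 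Since the latter intersection is a singleton (namely $\{X_{\sigma(s)}(\theta\omega)\}$, by the contraction property of inverse branches used to define $X_{\sigma(s)}$), this forces $f_\omega(X_s(\omega))=X_{\sigma(s)}(\theta\omega)$, as desired.

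For the second assertion, I would simply restrict attention to the first symbol $s_0$ of $s$. By Proposition \ref{cor:randomMarkov}, applied to the element $s_0\in\mathcal A$ of the Markov partition, there exists a nonempty open interval $J'\subset \Sone$ such that $\mathcal I^\omega_{s_0}\cap J'=\emptyset$ for every $\omega\in\Omega$. On the other hand, \eqref{eq:defofi} with $j=0$ gives $\mathcal I^\omega_{[s]_0^{n-1}}\subset \mathcal I^\omega_{s_0}$ for every $n\geq 1$, hence $X_s(\omega)\in\mathcal I^\omega_{s_0}$ for every $\omega\in\Omega$. Combining these two inclusions yields $X_s(\omega)\notin J'$ uniformly in $\omega$, which is the required statement.

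There is no serious obstacle here: both parts are essentially bookkeeping around the nested-intersection definition of $X_s(\omega)$ and the already-established properties of the random Markov partition $\{\mathcal I^{(\cdot)}_j\}_{j=0}^{k-1}$. The only point that deserves care is the appeal to the contraction of inverse branches (via \eqref{assumption2}), which is what guarantees that $\bigcap_{n\geq 0}\mathcal I^{\theta\omega}_{[\sigma(s)]_0^n}$ really is the single point $X_{\sigma(s)}(\theta\omega)$ and thus lets the first step conclude by uniqueness rather than by a limiting argument on intervals.
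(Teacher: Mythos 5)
Your proof is correct, and the structure is the same as the paper's: establish the shift identity, then deduce the hole property from Proposition~\ref{cor:randomMarkov}. The one place where you take a slightly different (and arguably cleaner) route is the shift identity. The paper first proves the set-level equality $f_\omega(\mathcal I^\omega_{[s]_0^n}) = \mathcal I^{\theta\omega}_{\sigma([s]_0^n)}$ at each finite level, and then invokes the continuity of $f_\omega$ acting on $\mathcal F(\Sone)$ with the Hausdorff metric to pass to the limit and obtain $f_\omega(\mathcal I^\omega_s) = \mathcal I^{\theta\omega}_{\sigma(s)}$. You instead work directly at the point level: you unwind the definition to get $f^{(j)}_\omega(X_s(\omega)) \in \mathcal I^{\theta^j\omega}_{s_j}$ for all $j$, reindex via the cocycle relation $f^{(j)}_\omega = f^{(j-1)}_{\theta\omega}\circ f_\omega$, and then conclude from the fact that $\bigcap_i \mathcal I^{\theta\omega}_{[\sigma(s)]_0^i}$ is the singleton $\{X_{\sigma(s)}(\theta\omega)\}$. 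This sidesteps the appeal to Hausdorff-metric continuity of $f_\omega$ on $\mathcal F(\Sone)$ entirely, since intersecting a nested family of containments requires no limiting argument. For the hole property, both proofs are the same: $X_s(\omega) \in \mathcal I^\omega_{s_0}$, which misses the interval $J'$ supplied by Proposition~\ref{cor:randomMarkov} uniformly in $\omega$; you just spell this out more explicitly than the paper does.
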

\begin{proof}
%Noting that $(f^{(j)} _\omega )^{-1} =(f_\omega )^{-1} \circ (f _{\theta \omega})^{-1} \circ \cdots \circ (f_{\theta ^{j-1}\omega} )^{-1}$, we have
%\begin{align*}
%f_\omega\left(\mathcal I ^\omega  _s  \right) &= \bigcap ^{n-1} _{j=0}f_\omega \circ \left(f^{(j)} _\omega \right)^{-1} \left(\overline{\mathcal I  ^{\theta ^j \omega} _{s_j}}\right) \\
%&=  \left( f_\omega \left(\overline{\mathcal I ^\omega _{s_0}} \right) \right) \cap \left(\bigcap ^{n-1} _{j=1} \left(f^{(j-1)} _{\theta \omega} \right)^{-1} \left(\overline{\mathcal I  ^{\theta ^j \omega} _{s_{j}}}\right) \right) \\
%&=  \left( f_\omega \left(\overline{\mathcal I ^\omega _{s_0}} \right) \right) \cap \left(\bigcap ^{n-2} _{j=0} \left(f^{(j)} _{\theta \omega} \right)^{-1} \left(\overline{\mathcal I  ^{\theta ^j (\theta \omega)} _{s_{j+1}}}\right) \right) \\
%\end{align*}
%For each $s =(s_0, s_1 ,\ldots ) \in \mathcal A ^{\N _0}$, we denote $(s_0, s_1, \ldots ,s_{n-1})$ by $[s]_n$. 
From \eqref{eq:23},  it follows that 
\begin{align*}
f_\omega \left(\mathcal I ^\omega _{[s] _0^n} \right) & =f_\omega \left(\mathcal I^\omega _{s_0}\right) \cap  \mathcal I ^{\theta \omega} _{\sigma ([s] _0^n)}  \\
&= \Sone \cap \mathcal I ^{\theta \omega} _{\sigma ([s] _0^n)} 
= \mathcal I ^{\theta \omega} _{\sigma ([s] _0^n)} .
\end{align*}
Thus, by the virtue of the continuity of $f_\omega :\mathcal F(\Sone) \to \mathcal F(\Sone)$,  we get $f_\omega (\mathcal I ^\omega _s) =\mathcal I _{\sigma (s)} ^{\theta \omega}$ for  every $\omega \in \Omega$. Together with the construction of $X_s $ and Proposition \ref{cor:randomMarkov}, this implies  the conclusion.
\end{proof}

\begin{lem}\label{lem:123}
Let $x$ be a point in $\Sone$ and $s =s(x)=(s_0,s_1,\ldots ) \in \mathcal A^{\N _0}$ the coding of $x$ by $E_k$, i.e., $E_k^j(x) \in I_{s_j}$ for each $j\geq 0$. Then,  $h(\omega)(x) =X_s(\omega)$ for  every $\omega \in \Omega$ and $x\in \Sone$, where $h$ is given in Theorem \ref{prop:randomshub}.
\end{lem}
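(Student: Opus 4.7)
The plan is to unwind the definitions and show that $h(\omega)(x)$ lies in the nested intersection whose unique point is $X_s(\omega)$. Fix $\omega \in \Omega$ and $x \in \Sone$, and let $s = s(x) = (s_0,s_1,\ldots)$ be its coding, so by hypothesis $E_k^j(x) \in I_{s_j}$ for every $j \geq 0$.

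First I would observe, directly from the definition $I_{[s]_0^n} = \bigcap_{j=0}^{n-1} (E_k^j)^{-1}(I_{s_j})$ used in \eqref{eq:2rev1}, that the coding property of $x$ is equivalent to $x \in I_{[s]_0^n}$ for every $n \geq 1$. Applying the homeomorphism $h(\omega)$ and invoking the identity $\mathcal I^\omega_{[s]_0^n} = h(\omega)(I_{[s]_0^n})$ from \eqref{eq:2rev1} then gives $h(\omega)(x) \in \mathcal I^\omega_{[s]_0^n}$ for every $n$.

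Next I would take the intersection over $n$: by the definition of $X_s(\omega)$ preceding Lemma \ref{lem:22}, the set $\mathcal I^\omega_s = \bigcap_{n \geq 0} \mathcal I^\omega_{[s]_0^n}$ consists of the single point $X_s(\omega)$. Hence $h(\omega)(x) \in \mathcal I^\omega_s = \{X_s(\omega)\}$, which yields $h(\omega)(x) = X_s(\omega)$, as required.

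There is essentially no obstacle here since all the substantive work has been done in Theorem \ref{prop:randomshub} (which gives the conjugacy and in particular \eqref{eq:2rev1}) and in Lemma \ref{lem:1} (which guarantees that the intersection shrinks to a singleton via the contraction of inverse branches from \eqref{assumption2}). The only minor point to verify carefully is the identity $\mathcal I^\omega_{[s]_0^n} = h(\omega)(I_{[s]_0^n})$, but this is already recorded in \eqref{eq:2rev1}. Thus the proof reduces to a one-line application of \eqref{eq:2rev1} followed by passage to the intersection.
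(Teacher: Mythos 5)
Your proof is correct and takes essentially the same approach as the paper's: the paper re-derives the identity $\mathcal I^\omega_{[s]_0^n} = h(\omega)\bigl(\bigcap_{j=0}^{n-1} E_k^{-j}(I_{s_j})\bigr)$ from \eqref{eq:conjugacy}, whereas you cite \eqref{eq:2rev1} directly, but the content is identical. Passing to the intersection and using that $\mathcal I^\omega_s$ is the singleton $\{X_s(\omega)\}$ is exactly the paper's concluding step.
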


\begin{proof}
It follows from \eqref{eq:conjugacy} and \eqref{eq:defofi}  that for each $n\geq 1$, we have
\begin{align*}
\mathcal I ^\omega  _{[s]_0^n}  &= \bigcap ^{n-1} _{j=0} \left(f^{(j)} _\omega \right)^{-1} \circ h  (\theta ^j \omega)\left( I _{s_j}\right) \\
&= \bigcap ^{n-1} _{j=0} \left(h  ( \omega) \circ E_k^{-j} ( I _{s_j})\right).
\end{align*}
Since $h(\omega):\Sone \to \Sone$ is a homeomorphism, this implies 
\[
 \mathcal I ^\omega  _s    =h(\omega)\left(\bigcap ^{\infty} _{j=0}  E_k ^{-j}( I _{s_j}) \right).
\]
Recalling $\mathcal I^\omega _s=\{ X_s(\omega) \}$, we 
immediately  get the conclusion. 
%\footnote{CHECK.}
\end{proof}

\subsection{Ergodicity and  SRB property}

As a final preparation before turning to the proof of Theorem \ref{conj:1}, we may need to find a sequence $s^{\prime \prime}$ such that  time averages  along the future random orbit of $X_{s^{\prime \prime}}$ $\mathbb P$-almost surely coincide with their corresponding integrals with respect to a probability measure that is equivalent to the normalised Lebesgue measure, and that the past random orbit of $X_{s^{\prime \prime}}$ is $\mathbb P$-almost surely dense.
We prepare language for this purpose. 

%By abuse of notation, we simply write $\epsilon _0$ for the minimum of 
Fix $0\leq \epsilon <\epsilon _0$ and let $h=h_\epsilon$ be the mapping given in Theorem \ref{prop:randomshub}.
Then, 
$ (\omega ,x) \mapsto h(\omega)(x)$ is a continuous mapping from $\Omega \times \Sone $ to $\Sone$ due to the continuity of $h:\Omega \to \mathrm{Homeo}(\Sone ,\Sone)$. Therefore for each continuous function $\varphi :\Sone \to \R$, the function $\Phi _h:\Omega \times \Sone \mapsto \R$ given by $\Phi _h(\omega ,x) = \varphi (h(\omega )(x))$ for $(\omega ,x) \in \Omega \times \Sone$ is continuous, in particular measurable. 
Moreover, it is straightforward to see that $\Vert \Phi_h\Vert _{L^1_{m\times \mathbb P}}\leq \Vert \varphi\Vert _{\Ci ^0}$, i.e., 
$\Phi _h$ is an integrable function with respect to the product measure $m\times \mathbb P$, where $m$ is the normalised Lebesgue measure on $\Sone$. Thus it follows from Fubini's theorem that $ \omega \mapsto \int \Phi _h(\omega , \cdot )dm$ is measurable. Furthermore, since $\varphi $ is an arbitrary continuous function, it follows from Riesz representation theorem that we can find a probability measure $(h (\omega)) _* m$ on $\Sone$  such that $ \int \Phi _h(\omega , \cdot )dm = \int \varphi  d[ (h (\omega)) _* m]$ for each $\omega \in \Omega$. (This probability measure is called the pushforward of $m$ by $h(\omega)$.)
%\[
%\left(\left(h\left(\omega \right) \right)_* m\right) (A) =m\left( \left( h(\omega) \right) ^{-1} A\right), \quad A\in \mathcal B(\Sone).
%\]

%For a Borel probability measure $\nu$ on $\Sone$ and $\omega \in \Omega$, we denote the pushforward of $\nu$ under $h(\omega) $ by $(h (\omega)) _* \nu$, i.e.,
%\[
%\left(\left(h\left(\omega \right) \right)_* m\right) (A) =m\left( \left( h(\omega) \right) ^{-1} A\right), \quad A\in \mathcal B(\Sone).
%\]
%Then for each $A\in \mathcal B(\Sone)$, the mapping $\omega \mapsto (h (\omega)) _* \nu (A)$ is measurable. Indeed, \footnote{Prove it}

\begin{thm}\label{thm:graphSRB}
For any continuous function $\varphi :\Sone \to \R$, there exist a full measure set $A = A(\varphi)$ of  $(\Sone ,m)$ and a family of full measure sets $\{\Gamma _x\}_{x\in A} =\{\Gamma _x(\varphi)\}_{x\in A}$ of $(\Omega ,\mathbb P)$ such that for each $x \in A$ and $\omega \in \Gamma _x$, if we set  $s^{\prime \prime}=s^{\prime \prime}(x) \in \mathcal A^{\mathbb N_0}$ as the coding of $x$ by $E_k$ given in Lemma \ref{lem:123}, then we have 
\[
\lim _{n\to \infty} \frac{1}{n} \sum ^{n-1} _{j=0} \varphi \circ f_{\omega} ^{(j)} (X_{s^{\prime \prime}}(\omega)) = \int \left(\int \varphi d\left[ \left( h(\cdot) \right) _* m\right] \right)d\mathbb P.
\]
\end{thm}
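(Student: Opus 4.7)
The plan is to reduce the Birkhoff-type average on the left-hand side to an ergodic sum for the \emph{product} dynamical system $\Theta_0(\omega,x)=(\theta\omega,E_k(x))$ on $(\Omega\times\Sone,\mathbb P\times m)$, and then invoke Birkhoff's ergodic theorem together with Fubini.

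First, I would rewrite the orbit. By Lemma \ref{lem:123}, $X_{s''}(\omega)=h(\omega)(x)$. By Lemma \ref{lem:22}, $f_\omega(X_s(\omega))=X_{\sigma(s)}(\theta\omega)$, and inductively $f_\omega^{(j)}(X_{s''}(\omega))=X_{\sigma^j(s'')}(\theta^j\omega)$. Since $\sigma^j(s''(x))$ is precisely the coding of $E_k^j(x)$ by $E_k$, a second application of Lemma \ref{lem:123} gives
\begin{equation*}
f_\omega^{(j)}(X_{s''}(\omega))=h(\theta^j\omega)(E_k^j(x)),
\end{equation*}
and therefore
\begin{equation*}
\frac{1}{n}\sum_{j=0}^{n-1}\varphi\circ f_\omega^{(j)}(X_{s''}(\omega))=\frac{1}{n}\sum_{j=0}^{n-1}\Phi_h(\Theta_0^j(\omega,x)),
\end{equation*}
where $\Phi_h(\omega,x)=\varphi(h(\omega)(x))$ is the integrable function introduced just before the theorem.

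Next I would check that the product system $(\Theta_0,\mathbb P\times m)$ is measure-preserving and ergodic. Preservation is immediate: $\theta$ preserves $\mathbb P$ by hypothesis, and $E_k$ preserves the normalised Lebesgue measure $m$ on $\Sone$. For ergodicity I would use that $(E_k,m)$ is exact (hence mixing, hence weakly mixing), while $(\theta,\mathbb P)$ is ergodic by assumption; the classical fact that the product of an ergodic system with a weakly mixing one is ergodic then yields ergodicity of $(\Theta_0,\mathbb P\times m)$.

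Birkhoff's ergodic theorem applied to $\Phi_h\in L^1(\mathbb P\times m)$ then gives a full $(\mathbb P\times m)$-measure set $G\subset\Omega\times\Sone$ on which the ergodic averages converge to $\int\Phi_h\,d(\mathbb P\times m)$. By Fubini's theorem, the set $A=\{x\in\Sone:\mathbb P(\Gamma_x)=1\}$ with $\Gamma_x=\{\omega:(\omega,x)\in G\}$ has full $m$-measure, and for $x\in A$ and $\omega\in\Gamma_x$,
\begin{equation*}
\lim_{n\to\infty}\frac{1}{n}\sum_{j=0}^{n-1}\Phi_h(\Theta_0^j(\omega,x))=\int\!\!\int\varphi(h(\omega)(x))\,dm(x)\,d\mathbb P(\omega)=\int\!\!\Bigl(\int\varphi\,d[(h(\omega))_*m]\Bigr)d\mathbb P(\omega),
\end{equation*}
which is exactly the desired identity. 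The only nonroutine point is the ergodicity of $\Theta_0$, and that is handled by the standard weak-mixing/ergodic product criterion applied to the exact map $E_k$; everything else is a direct rewriting via Lemmas \ref{lem:22} and \ref{lem:123} followed by Birkhoff and Fubini.
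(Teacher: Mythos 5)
Your proof follows essentially the same route as the paper: rewrite the fibred Birkhoff sum via the conjugacy as an ergodic sum for the product system $\Theta_{E_k}(\omega,x)=(\theta\omega,E_k(x))$, apply Birkhoff's ergodic theorem with the observable $\Phi_h$, and use Fubini to extract the full-measure set $A$ and the family $\{\Gamma_x\}_{x\in A}$. The one place where you are actually more careful than the paper is the ergodicity of the product system: the paper simply invokes the ergodicity of $\theta$ and of $E_k$ separately and says ``hence'' $\Theta_{E_k}$ is ergodic, which on its own is not a valid inference (a product of two ergodic systems need not be ergodic), whereas you correctly observe that $E_k$ is exact, hence weakly mixing, and apply the standard criterion that the product of an ergodic system with a weakly mixing system is ergodic.
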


\begin{proof}
For given  $x\in \Sone$, let $s^{\prime \prime} \in \mathcal A^{\N _0}$ be the coding of $x$ by $E_k$.
By Theorem \ref{prop:randomshub} and Lemma \ref{lem:123},  we have
\[
\varphi \circ f_{\omega} ^{(j)} (X_{s^{\prime \prime}}(\omega)) =\varphi ( f_{\omega} ^{(j)} \circ h(\omega) (x)) =\varphi ( h(\theta ^j \omega) \circ E_k^j (x))
\]
 for all $j\geq 0$ and   $\omega \in \Omega$. 
Thus, if we define 
%a measurable function $\Phi _h: \Omega \times \Sone \to \R$, $\Phi _h (\omega ,x) = \varphi (h(\omega)(x))$ \footnote{Check the measurability.} 
%and 
a measurable mapping $\Theta _{E_k} :\Omega \times \Sone \to \Omega \times \Sone$ of the direct-product form
\[
\Theta _{E_k} (\omega ,x) =(\theta \omega , E_k(x)), \quad (\omega ,x) \in \Omega \times \Sone ,
\]
then
\begin{equation}\label{eq:revis1}
\lim _{n\to \infty} \frac{1}{n} \sum ^{n-1} _{j=0} \varphi \circ f_{\omega} ^{(j)} (X_{s^{\prime \prime}}(\omega)) = \lim _{n\to \infty} \frac{1}{n} \sum ^{n-1} _{j=0} \Phi _h \circ \Theta _{E_k} ^j (\omega ,x),
\end{equation}
where $\Phi _h: \Omega \times \Sone \to \R$ is the integral function given above Theorem \ref{thm:graphSRB}. 
Recall that $\theta$ is assumed to be ergodic, and that $E_k$ is also known to be ergodic (see, e.g., \cite{KH95}).
Hence, applying Birkhoff's ergodic theorem to the ergodic transformation $(\Theta _{E_k}, \mathbb P\times m)$ with the integral function $\Phi _h$, we can find a  $(\mathbb P\times m)$-full measure set $G$ such that
the time average in \eqref{eq:revis1}   coincides with $\int \Phi _h dm d\mathbb P$ for all $(\omega ,x) \in G$.

Let $1 _{G}:\Omega \times \Sone \to \R$ be the indicator function  of $G$.
Then,
it follows from  Fubini's theorem that
there is a full measure set $A_1$ of $(\Sone ,m)$ such that 
$1_{G}(x,\cdot ): \Omega \to \R$ is integrable for  every $x\in A_1$.
Let $\Gamma _x=\mathrm{supp} (1_{G}(x,\cdot))$ and $A=\{ x\in A_1 \mid \mathbb P(\Gamma _x) =\int 1_G(x, \cdot )d\mathbb P =1\}$. 
Then $m(A)=1$ (otherwise, it contracts to that $\mathbb P\times m(G)=1$),
%we get a family of measurable functions $\Gamma x\mapsto \int 1_G(\cdot ,x)d\mathbb P$
and the conclusion immediately follows from \eqref{eq:revis1} 
%these estimates 
and the observation above Theorem \ref{thm:graphSRB}.
\end{proof}

\begin{rem}\label{rmk:4}
Theorem \ref{thm:graphSRB}  is exactly the reason why Theorem \ref{conj:1}  is only stated for $\mathbb P$-almost every $\omega \in \Omega$ (not for all $\omega \in \Omega$). 
See that  the statements of Proposition \ref{randomDowker}, Proposition \ref{cor:randomMarkov}, Lemma \ref{lem:22} and Theorem \ref{thm:graphSRBbackward} are given for all $\omega \in \Omega$, while Theorem \ref{thm:graphSRB} is only stated for $\mathbb P$-almost every $\omega$. 
This restriction in Theorem \ref{thm:graphSRB} comes from that the convergence of  time averages along the future random orbit of $X_{s^{\prime \prime}}$  is ensured by  the ergodicity of $(\Theta _{E_k} ,\mathbb P\times m)$, ultimately, by the ergodicity of $(\theta ,\mathbb P)$. 
%Therefore,  it seems to be impossible to replace  ``for $\mathbb P$-almost every $\omega$'' with ``for all $\omega$''  in Theorem \ref{thm:graphSRB}, if one employs the techniques in this paper.
\end{rem}

Theorem \ref{thm:graphSRB} tells that time averages along  the future random orbit of $X_{s^{\prime \prime}}$ (with some $s^{\prime \prime} \in \mathcal A^{\N _0}$) $\mathbb P$-almost surely coincide with their corresponding integrals with respect to a probability measure that is equivalent to $\mathbb P\times m$.
However, this may not imply in general that the \emph{past} random orbit of $X_{s^{\prime \prime}}$ is $\mathbb P$-almost surely dense in $\Sone$, 
and we need the following theorem  to apply Proposition \ref{randomDowker}.

%\begin{rem}
%Theorem \ref{thm:graphSRB} together with Theorem \ref{thm:BKS} gives  information on the geometry of the set of points for which \eqref{eq:stability} holds. That is, it is easy to see that a full measure set $\{ (\omega , h(\omega)(x)) \mid (\omega ,x) \in \Gamma \times A \}$ is included in the set of points satisfying \eqref{eq:stability} (recall Lemma \ref{lem:123}). 
% Furthermore, these results lead to  a  formula for the disintegration of the unique aceip $\mu ^\epsilon$: we  have
%% \footnote{Fix the crash of notation $h$ with BKS's. %More detailed comment on the set satisfying the SRB condition?
%\[
%\mu _\omega ^\epsilon=\left(h\left(\omega \right)\right) _* m, \quad \text{$\mathbb P$-almost surely}.
%\]
%\end{rem}

\begin{thm}\label{thm:graphSRBbackward}
There exists a full measure set $A$ of  $(\Sone ,m)$ 
 such that for each $x \in  A$, if we set  $s^{\prime \prime}=s^{\prime \prime}(x) \in \mathcal A^{\mathbb N_0}$ as the coding of $x$ by $E_k$ given in Lemma \ref{lem:123}, then we have 
\begin{equation}\label{eq:revis3}
\lim _{n\to \infty} \frac{1}{n} \sum ^{n-1} _{j=0} \varphi \circ f_{\theta ^{-j} \omega} ^{(j)} (X_{s^{\prime \prime}}(\theta ^{-j}\omega)) = \int \varphi d\left[ \left( h(\omega) \right) _* m\right] ,
\end{equation}
for any continuous function $\varphi :\Sone \to \R$ and $\omega \in \Omega$.
%, where $A$ is independent of $\varphi$ and $\omega$.

Furthermore, for any $s^{\prime \prime} =s^{\prime \prime}(x)$ with $x\in A$, the past random orbit of $X_{s^{\prime \prime}}$ is  dense at every $\omega \in \Omega$.
\end{thm}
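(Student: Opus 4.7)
The plan is to exploit the random conjugacy $h$ from Theorem~\ref{prop:randomshub} to reduce the Birkhoff sum along the past random orbit to an ordinary Birkhoff sum for the deterministic map $E_k$. By Lemma~\ref{lem:123}, $X_{s^{\prime \prime}}(\theta^{-j}\omega) = h(\theta^{-j}\omega)(x)$, and iterating the conjugacy relation \eqref{eq:conjugacy} with $\omega$ replaced by $\theta^{-j}\omega$ gives
$$
f^{(j)}_{\theta^{-j}\omega}\bigl(X_{s^{\prime \prime}}(\theta^{-j}\omega)\bigr) = h(\omega)\bigl(E_k^j(x)\bigr), \qquad j\geq 0,\ \omega\in\Omega.
$$
Hence the left-hand side of \eqref{eq:revis3} equals the deterministic Birkhoff average $\frac{1}{n}\sum_{j=0}^{n-1} \psi_\omega(E_k^j(x))$, where $\psi_\omega(y) := \varphi(h(\omega)(y))$ is a continuous function on $\Sone$ for each fixed $\omega$.

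The crucial step is to choose $A\subset \Sone$ independently of $\omega$ and $\varphi$. I would take $A$ to be the set of \emph{$m$-generic} points for $E_k$, that is, those $x\in\Sone$ for which the empirical measures $\frac{1}{n}\sum_{j=0}^{n-1} \delta_{E_k^j(x)}$ converge in the weak$^*$ topology to $m$. Because $E_k$ is ergodic with respect to the Lebesgue measure $m$ (as recalled in the proof of Theorem~\ref{thm:graphSRB}) and $C(\Sone,\R)$ is separable, applying Birkhoff's theorem to a countable dense family of continuous observables and intersecting the resulting full-measure sets shows $m(A)=1$.

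Now for any $x\in A$, any $\omega\in\Omega$, and any continuous $\varphi$, the continuity of $\psi_\omega$ together with the definition of $A$ gives
$$
\frac{1}{n}\sum_{j=0}^{n-1} \psi_\omega\bigl(E_k^j(x)\bigr) \longrightarrow \int \psi_\omega\, dm = \int \varphi \circ h(\omega)\, dm = \int \varphi\, d\bigl[(h(\omega))_* m\bigr],
$$
which is \eqref{eq:revis3}. The density assertion follows from the same identification: genericity implies that $\{E_k^j(x)\}_{j\geq 0}$ is dense in $\Sone$ (since $m$ has full support), and because $h(\omega)$ is a homeomorphism the set $\{h(\omega)(E_k^j(x))\}_{j\geq 0}$, which by the displayed formula above coincides with the past random orbit of $X_{s^{\prime \prime}}$ at $\omega$, is dense as well.

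The only subtle point I anticipate is producing a single full-measure set $A\subset\Sone$ that works simultaneously for every $\omega\in\Omega$ and every continuous $\varphi$; a naive pointwise Birkhoff argument applied to the family $\{\psi_\omega\}_{\omega\in\Omega}$ would yield an $(\omega,\varphi)$-dependent exceptional set. Passing through weak$^*$ convergence of empirical measures folds this uncountable family of test functions into a single convergence statement depending only on $x$, which is precisely what makes the argument uniform in $\omega$ and explains why, in contrast to Theorem~\ref{thm:graphSRB}, no $\mathbb P$-exceptional set appears in the conclusion.
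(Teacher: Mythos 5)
Your proposal is correct and follows essentially the same route as the paper: reduce the past-orbit Birkhoff sum to a deterministic Birkhoff sum for $E_k$ via the conjugacy $f^{(j)}_{\theta^{-j}\omega}(X_{s''}(\theta^{-j}\omega))=h(\omega)(E_k^j(x))$, take $A$ to be the set of $m$-generic points for $E_k$, and conclude using that $m$ has full support. The only cosmetic difference is in the density claim, where the paper argues by contradiction with a bump-function observable in \eqref{eq:revis3} while you argue directly that genericity forces the $E_k$-orbit (and hence its homeomorphic image) to be dense.
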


\begin{proof}
We first recall that  the normalised Lebesgue  measure $m$ is the unique SRB probability measure of $E_k$. I.e., there exists a full measure set $A\subset \Sone$ such that for every $x\in A$ and   continuous function $\psi :\Sone \to \R$, we have
\begin{equation}\label{eq:revis2}
\lim _{n\to \infty} \frac{1}{n} \sum ^{n-1} _{j=0} \psi ( E_k^j (x)) = \int \psi dm.
\end{equation}
A standard reference for this property  is \cite{Viana99}.

Let   $\varphi :\Sone \to \R$ be a continuous function and  $\omega \in \Omega$. 
In a similar manner to the proof of Theorem \ref{thm:graphSRB}, we have
\[
\varphi \circ f_{\theta ^{-j}\omega} ^{(j)} (X_{s^{\prime \prime}}(\theta ^{-j}\omega)) =\varphi (h(\omega) \circ E_k^j (x))
\]
 for all $j\geq 0$. Thus,
\[
\lim _{n\to \infty} \frac{1}{n} \sum ^{n-1} _{j=0} \varphi \circ f_{\theta ^{-j} \omega} ^{(j)} (X_{s^{\prime \prime}}(\theta ^{-j}\omega)) =\lim _{n\to \infty} \frac{1}{n} \sum ^{n-1} _{j=0} \varphi \circ h(\omega) (E_k^j (x)) .
\]
Therefore, applying \eqref{eq:revis2} with $\psi = \varphi \circ h(\omega)$, we have
\[
\lim _{n\to \infty} \frac{1}{n} \sum ^{n-1} _{j=0} \varphi \circ f_{\theta ^{-j} \omega} ^{(j)} (X_{s^{\prime \prime}}(\theta ^{-j}\omega)) = \int  \varphi \circ h(\omega) dm = \int \varphi d\left[ \left( h(\omega) \right) _* m\right].
\]

The later assertion is shown by contradiction. Fix $s^{\prime \prime}=s^{\prime \prime}(x)$ with some  $x\in A$ and assume that the past random orbit of $X _{s^{\prime \prime}}$ is not dense at some $\omega \in \Omega$.
Then, there is a nonempty open interval $J\subset \Sone$ with which the past random orbit $\{ f^{(j)}_{\theta ^{-j}\omega} (X_{s^{\prime \prime}}(\theta ^{-j}\omega))\} _{j\geq 0}$ does not  intersect. Thus if we let $\varphi$ be a continuous nonnegative function whose support is a nonempty set  included in $J$, then the left-hand side of  \eqref{eq:revis3} is equal to zero, while the right-hand side is nonzero since $h(\omega):\Sone \to \Sone$ is a homeomorphism so that the  support of $(h(\omega))_*m$ coincides with $\Sone$.  This is a contradiction, and we complete the proof.
\end{proof}

Since the intersection of two full measure sets  is also a full measure set,  we immediately get the following theorem  by  Theorem \ref{thm:graphSRB} and  \ref{thm:graphSRBbackward}.
\begin{thm}\label{thm:graphSRBfull}
For any continuous function $\varphi :\Sone \to \R$, 
there exist a full measure set $A = A(\varphi)$ of  $(\Sone ,m)$ and a family of full measure sets $\{\Gamma _x\}_{x\in A} =\{\Gamma _x(\varphi)\}_{x\in A}$ of $(\Omega ,\mathbb P)$ such that for each $x \in A$ and $\omega \in \Gamma _x$,
%there exist full measure sets  $\Gamma = \Gamma (\varphi)$ and $A = A(\varphi)$ of $(\Omega ,\mathbb P)$ and $(\Sone ,m)$,  respectively, such that for each $x \in  A$, 
if we set  $s^{\prime \prime}=s^{\prime \prime}(x) \in \mathcal A^{\mathbb N_0}$ as the coding of $x$ by $E_k$ given in Lemma \ref{lem:123}, then 
%for every $\omega \in \Gamma$, 
we have that
\begin{enumerate}
\item[$\mathrm{(1)}$] $\displaystyle \lim _{n\to \infty} B_n(\varphi ;\omega , X_{s^{\prime \prime}}(\omega)) = \int \left(\int \varphi d\left[ \left( h(\cdot) \right) _* m\right] \right)d\mathbb P$;
\item[$\mathrm{(2)}$] the past random orbit of $X_{s^{\prime \prime}}$ at $\omega$ is  dense in $\Sone$.
\end{enumerate}
%
%\begin{alignerate}
%\item & \displaystyle \lim _{n\to \infty} B_n(\varphi ;\omega , X_{s^{\prime \prime}}(\omega)) = \int \left(\int \varphi d\left[ \left( h(\cdot) \right) _* m\right] \right)d\mathbb P;\\
%\item & \text{the past random orbit of $X_{s^{\prime \prime}}$ at $\omega$ is  dense.
%}
%\end{alignerate}
\end{thm}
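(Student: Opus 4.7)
The proof will be a short bookkeeping argument that simply intersects the full measure sets produced by the two preceding theorems. The plan is as follows.

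First, I would fix an arbitrary continuous $\varphi:\Sone\to\R$ and invoke Theorem \ref{thm:graphSRB} to obtain a full measure set $A_1=A_1(\varphi)\subset\Sone$ and, for each $x\in A_1$, a full measure set $\Gamma_x^{(1)}\subset\Omega$ on which conclusion (1) holds for the coding $s^{\prime\prime}(x)$. Next I would invoke Theorem \ref{thm:graphSRBbackward} to obtain a full measure set $A_2\subset\Sone$ with the property that for each $x\in A_2$ the past random orbit of $X_{s^{\prime\prime}(x)}$ is dense at \emph{every} $\omega\in\Omega$; in other words, the corresponding "backward" full measure set may simply be taken to be all of $\Omega$, independent of $x$.

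The desired full measure set is then $A(\varphi):=A_1(\varphi)\cap A_2$, which is full measure in $(\Sone,m)$ since it is the intersection of two full measure sets, and for each $x\in A(\varphi)$ I set $\Gamma_x(\varphi):=\Gamma_x^{(1)}$, still a full measure subset of $(\Omega,\mathbb P)$. For any $x\in A(\varphi)$ and $\omega\in\Gamma_x(\varphi)$, both (1) and (2) hold by construction: (1) by Theorem \ref{thm:graphSRB}, and (2) by Theorem \ref{thm:graphSRBbackward} (which in fact provides denseness for every $\omega\in\Omega$, in particular for $\omega\in\Gamma_x(\varphi)$).

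There is no genuine obstacle here; the only subtlety worth flagging explicitly is that the two theorems produce sets of different ``shapes'' (Theorem \ref{thm:graphSRB} yields an $x$-dependent $\Gamma_x$, while Theorem \ref{thm:graphSRBbackward} holds uniformly in $\omega$), so one has to be careful that the intersection is taken on the $\Sone$-factor where both statements live, and that the $\omega$-factor is inherited unchanged from Theorem \ref{thm:graphSRB}. After that, the conclusion is immediate.
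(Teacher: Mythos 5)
Your proof is correct and is essentially identical to the paper's, which likewise just intersects the full measure subsets of $\Sone$ from Theorems \ref{thm:graphSRB} and \ref{thm:graphSRBbackward} and keeps the $\omega$-sets $\Gamma_x$ from the former since the latter holds for all $\omega\in\Omega$. The paper states this in a single sentence; your extra care about the "shapes" of the two full-measure sets is a fair observation but not a new ingredient.
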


%Finally, we need the following theorem to ensure that $s^{\prime \prime}$

\subsection{The end of the proof}
We will construct a sequence $\bar s\in \mathcal A^{\N_0}$ such that $X_{\bar s}$ $\mathbb P$-almost surely satisfies the hypotheses in Proposition \ref{randomDowker}.
As in the treatment undertaken by Takens \cite[Section 4]{Takens2008}, this sequence will be an appropriate  combination of a periodic sequence $s^\prime$ and a sequence $s^{\prime \prime}$ generating a probability measure that is equivalent to the normalised Lebesgue measure.

%In order to decide the proper lengths of the sequences, we first prepare several lemmas.
Fix $0\leq \epsilon <\epsilon _0$ and let $h=h_\epsilon$ be the mapping given in Theorem \ref{prop:randomshub}. 
Let $s^\prime \in \mathcal A^ {\N_0}$ be a periodic sequence. For simplicity, we set  $s^\prime = (00\ldots )$.  
 By virtue of Proposition \ref{cor:randomMarkov} and Lemma \ref{lem:22}, there exist nonempty open intervals $J\subset J^\prime \subset \Sone$ both of which do not intersect $\mathcal I ^\omega _0$ (in particular, $X_{s^\prime} (\omega)=X_{(00\ldots)} (\omega)$) for  every $\omega$, where the inclusion $J\subset J^\prime $ is strict.  
Let $\varphi _0:\Sone \to \R$ be a $\Ci ^1$  function such that 
\begin{itemize}
\item the support of $\varphi _0$ is included in $J^\prime$,
\item $ \varphi _0(x) =1$ for all $x\in J$,
\item $0\leq \varphi _0(x) \leq 1$ for all $x\in \Sone$. 
\end{itemize}
Then, for all $n\geq 1$ and $\omega \in \Omega$, we have
\begin{equation}\label{eq:sss5}
B_{n} (\varphi _0 ;\omega , X_{s^{\prime }}(\omega))=0 .
\end{equation}
Moreover, it follows from Theorem   \ref{thm:graphSRBfull}  that we can find a sequence $s^{\prime \prime} \in \mathcal A^{\N _0}$ 
%(i.e., $s^{\prime \prime} = s^{\prime \prime}(x)$ with   some $x\in A(\varphi _0)$) 
and a  full measure set $\tilde \Gamma =\tilde \Gamma _{s^{\prime\prime}} (\varphi _0)$ of $(\Omega, \mathbb P)$ such that 
$B_{n}(\varphi _0 ;\omega ,X_{ s^{\prime \prime}}(\omega))$  converges to $\int (\int \varphi _0d[(h(\cdot ))_*m] )d\mathbb P$ as $n$ goes to infinity for  every  $\omega \in \tilde \Gamma$.
%For $s =(s_0, s_1\ldots s_{m-1}, s_m, s_{m+1},\ldots ,s_{n-1}, \ldots )\in \mathcal A^{\N _0}$ with integers $n> m\geq 0$, we simply write $[s]_m^n$ for $(s_m, s_{m+1},\ldots ,s_{n-1})$. \footnote{CHECK if $n>m$ is OK. MOVE to where $[s]_n$ was defined.}
Let $\Gamma = \cap _{n\geq 0} \theta ^{-n} (\tilde \Gamma)$, then it is straightforward to see that $\theta(\Gamma) \subset \Gamma \subset \tilde \Gamma$ and $\mathbb P(\Gamma) =1$.\footnote{Assume that $\mathbb P(\theta ^{-1}(\tilde \Gamma) \cap \tilde \Gamma) <1$. Then $\mathbb P(\theta ^{-1} \tilde \Gamma \cup \tilde \Gamma) =\mathbb P(\theta ^{-1} (\tilde \Gamma)) +\mathbb P(\tilde \Gamma) -\mathbb P(\theta ^{-1}(\tilde \Gamma) \cap \tilde \Gamma)>1$ since $\theta$ is measure-preserving and $\mathbb P(\tilde \Gamma)=1$. This contradicts to that $\mathbb P$ is a probability measure, so that $\mathbb P(\theta ^{-1}(\tilde \Gamma) \cap \tilde \Gamma)=1$. Reiterating the argument, we get that $\cap _{0\leq n\leq N} \theta ^{-n}(\tilde \Gamma)$ is a full measure set for each $N\geq 0$, which immediately implies $\mathbb P(\Gamma) =1$.}
For convenience, we define a nonnegative number $\rho _n (\omega)$ with $n\geq 1$ and $\omega \in \Omega$ by
\begin{equation}\label{eq:revisa7}
\rho _n(\omega) = \left\vert B_{n} (\varphi _0 ;\omega , X_{s^{\prime \prime}}(\omega)) -\int \left(\int \varphi _0d[(h(\cdot ))_*m] \right)d\mathbb P \right\vert .
\end{equation}
Then $\rho _n(\theta ^N\omega)$ converges to zero as $n$ goes to infinity  for every $\omega \in \Gamma$ and $N\geq 0$.

Recall that for a sequence $s= (s_0, s_1,\ldots ,s_m, s_{m+1},\ldots ,s_{n},\ldots )\in \mathcal A^{\N_0}$ with some integers $n\geq m\geq 0$, $(s_m, s_{m+1}, \ldots ,s_{n})$ is denoted by $[s]^n_m$. For a positive  number $a$, we write $[a]$ for the integer part of $a$.
\begin{lem}\label{lem:rev1}
For any integer $m\geq 0$ and real number $\rho >0$, we can find an integer $n\geq 2m+2$ such that,
for all $\omega \in \Omega$ and  $s, t\in \mathcal A^{\N _0}$ satisfying that $[s]_m^{n-1}=[t]^{n-1}_m$, we have 
\begin{equation}\label{eq:s1}
 \left\vert B_{\left[\frac{n}{2}\right]} (\varphi _0 ;\omega , X_{s}(\omega))  -  B_{\left[\frac{n}{2}\right]}  (\varphi _0 ;\omega , X_{t}(\omega)) \right\vert \leq \rho .
\end{equation}
\end{lem}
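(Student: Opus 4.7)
The approach is to split the truncated Birkhoff sum of length $[n/2]$ into the initial segment $0\leq j\leq m-1$, where the two summands differ by at most $2\|\varphi_0\|_{\Ci^0}$, and the tail $m\leq j\leq [n/2]-1$, where the shared coding $[s]_m^{n-1}=[t]_m^{n-1}$ forces both iterates into a common Markov cylinder that shrinks geometrically in $n-j$. The initial contribution will be of size $O(m/n)$ and the tail contribution exponentially small in $n$, so both vanish as $n\to\infty$ uniformly in $\omega$, $s$ and $t$.

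For the tail, Lemma \ref{lem:22} iterated yields $f^{(j)}_\omega(X_s(\omega))=X_{\sigma^j s}(\theta^j\omega)\in\mathcal I^{\theta^j\omega}_{[s]_j^{n-1}}$ and likewise for $t$, while the hypothesis guarantees $\mathcal I^{\theta^j\omega}_{[s]_j^{n-1}}=\mathcal I^{\theta^j\omega}_{[t]_j^{n-1}}$ for every $m\leq j\leq n-1$. The Markov structure together with the orientation preservation of $f_\epsilon(\omega)$ permits us to lift $f^{(n-j)}_{\theta^j\omega}$ to a monotone map on a lift of this cylinder, with image of length $1$ (since the cylinder maps onto $\Sone$ by the Markov property) and derivative at least $\lambda^{n-j}$ by \eqref{assumption2}, giving $\diam(\mathcal I^{\theta^j\omega}_{[s]_j^{n-1}})\leq\lambda^{-(n-j)}\leq\lambda^{-[n/2]}$ for every $j\leq[n/2]-1$. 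Combined with the $\Ci^1$ regularity of $\varphi_0$ and the mean value theorem, this bounds each tail summand by $\|\varphi_0'\|_{\Ci^0}\lambda^{-[n/2]}$.

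Collecting the two pieces gives
\[
\left|B_{[n/2]}(\varphi_0;\omega,X_s(\omega))-B_{[n/2]}(\varphi_0;\omega,X_t(\omega))\right|\leq\frac{2m\|\varphi_0\|_{\Ci^0}}{[n/2]}+\|\varphi_0'\|_{\Ci^0}\lambda^{-[n/2]},
\]
a bound uniform in $\omega$, $s$ and $t$ that tends to zero as $n\to\infty$. Any $n\geq 2m+2$ making the right-hand side at most $\rho$ then satisfies \eqref{eq:s1}. The only genuinely delicate step is the diameter estimate for the random Markov cylinder; passing to a lift is essential, since the bound $\lambda^{-(n-j)}$ uses the injectivity of $f^{(n-j)}_{\theta^j\omega}$ on the cylinder, which holds on the lift but not on $\Sone$ itself.
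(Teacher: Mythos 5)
Your proof is correct and follows essentially the same route as the paper's: split the length-$[n/2]$ Birkhoff sum into the first $m$ terms (bounded by $O(m/n)$, so made small by taking $n$ large) and the tail $m\leq j\leq [n/2]-1$, where the shared word $[s]_m^{n-1}=[t]_m^{n-1}$ places both iterates in the same random Markov cylinder of word-length $n-j$, whose diameter decays like $\lambda^{-(n-j)}\leq\lambda^{-[n/2]}$ by \eqref{assumption2}; the mean value theorem with the $\Ci^1$ norm of $\varphi_0$ then controls each tail summand. The paper organizes the split through the identity \eqref{eq:revisb1} rather than by subdividing the sum directly, but the content is identical, and your final bound is in fact slightly tighter than the paper's stated \eqref{eq:revisa6}.
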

\begin{proof}
We  fix $m\geq 0$ and $\rho >0$. Note that for any $n^\prime \geq m+1$, $\omega \in \Omega$ and $x\in \Sone$, we have
\begin{equation}\label{eq:revisb1}
B_{n^\prime } (\varphi _0;\omega,x) = \frac{m}{n^\prime }   B_m(\varphi _0;  \omega , x) +\frac{n^\prime -m}{n^\prime}  B_{n^\prime -m}(\varphi _0; \theta ^m \omega , f^{(m)}_\omega (x))
\end{equation}
and 
\begin{equation}\label{eq:revisb2}
\left\vert  \frac{m}{n^\prime }   B_m(\varphi _0;  \omega , x) \right\vert \leq \frac{m}{n^\prime } \Vert \varphi _0\Vert _{\Ci ^0} \leq \frac{m}{n^\prime}.
\end{equation} 
Hence, 
if we take $n\geq 2m+2$ sufficiently large, then $\left[\frac{n}{2}\right] \geq m + 1$ and 
%(by applying \eqref{eq:revisb1} and \eqref{eq:revisb2} with $n^\prime =\left[\frac{n}{2}\right]$) 
we get
\begin{multline}\label{eq:la1}
 \left\vert B_{\left[\frac{n}{2}\right]} (\varphi _0 ;\omega , X_{s}(\omega))  -  B_{\left[\frac{n}{2}\right]}  (\varphi _0 ;\omega , X_{t}(\omega)) \right\vert \\
 \leq \frac{\rho}{2} + \left\vert B_{\left[\frac{n}{2}\right]-m}\left(\varphi _0; \theta ^m \omega , f^{(m)}_\omega ( X_{s}(\omega))\right) - B_{\left[\frac{n}{2}\right] -m}\left(\varphi _0; \theta ^m \omega , f^{(m)}_\omega ( X_{t}(\omega))\right) \right\vert  .
\end{multline}

Let $s, t\in \mathcal A^{\N _0}$ satisfying that $[s]_m^{n-1}=[t]_m^{n-1}$ with some $n\geq 2m+2$. Then  by the argument in the proof of  Lemma \ref{lem:22},
$
f^{(m+\ell)}_\omega (X_s(\omega)) = f^{(\ell)} _{\theta ^m \omega}(X_{\sigma ^m s}(\theta ^m \omega))$ and  $f^{(m+\ell)}_\omega (X_t(\omega)) =f^{(\ell)} _{\theta ^m \omega}(X_{\sigma ^m t}(\theta ^m \omega))$,  both of which are in the interval  
$f^{(\ell)} _{\theta ^m \omega} \left(\mathcal I _{[s]_m^{n-1}}^{\theta ^m\omega} \right)
 %=f^{(\ell)} _{\theta ^m \omega}\left( \mathcal I _{[t]_m^{n}}^{\theta ^m\omega}\right)
 = \mathcal I^{\theta ^{m+\ell}\omega}_{\sigma ^\ell [s]_m^{n-1}}$ for each $0\leq \ell \leq \left[\frac{n}{2}\right] -m-1$. Moreover, when $0\leq \ell \leq \left[\frac{n}{2}\right] -m-1$, due to \eqref{assumption2} and \eqref{eq:defofi}, the
 length of the interval $\mathcal I^{\theta ^{m+\ell}\omega}_{\sigma ^\ell [s]_m^{n-1}}$    is  less than 
 $
 \lambda ^{-(n-m-\ell)} \leq \lambda ^{-\frac{n}{2}}.
 $
 (Notice that  the length of  the word $\sigma ^\ell [s]_m^{n-1}$ is $n-m-\ell$.)
% and $\left[\frac{n}{2}\right] -m-1\leq \frac{n}{2}-m$
Thus, it follows from the mean value theorem that 
\[
\left\vert \varphi _0\left( f^{(m+\ell)}_\omega (X_s(\omega)) \right) - \varphi _0\left( f^{(m+\ell)}_\omega (X_t(\omega))\right) \right\vert \leq \lambda ^{-\frac{n}{2}} \Vert \varphi _0 \Vert _{\Ci ^1}
\]
for every $0\leq \ell \leq \left[\frac{n}{2}\right] -m-1$, and we have
\begin{multline}\label{eq:revisa6}
\left\vert B_{\left[\frac{n}{2}\right]-m}\left(\varphi _0; \theta ^m \omega , f^{(m)}_\omega ( X_{s}(\omega))\right) - B_{\left[\frac{n}{2}\right] -m}\left(\varphi _0; \theta ^m \omega , f^{(m)}_\omega ( X_{t}(\omega))\right) \right\vert \\
\leq\left(\left[\frac{n}{2}\right]-m\right) \lambda ^{-\frac{n}{2}} \Vert \varphi _0 \Vert _{\Ci ^1}.
 %\leq \sum _{\ell =0}^{\left[\frac{n}{2}\right]-m -1} \left\vert \varphi _0\left( f^{(\ell)} _{\theta ^m \omega} \left(\mathcal I _{[s]_m^{n}}^{\theta ^m\omega} \right)\right) - \varphi _0\left( f^{(\ell)} _{\theta ^m \omega} \left(\mathcal I _{[t]_m^{n}}^{\theta ^m\omega} \right)\right) \right\vert 
\end{multline}
 %is bounded by $\left(\left[\frac{n}{2}\right]-m\right) \lambda ^{-(\frac{n}{2}-1)} \Vert \varphi _0 \Vert _{\Ci ^1}$. This
 This should be less than $\frac{\rho}{2}$ by taking $n$ sufficiently large, and the conclusion immediately follows from   \eqref{eq:la1}.
%\footnote{Maybe, the continuity if $B_n$ is not so trivial? Use the lemma in NTW? .. Maybe, need to set the staring assumption such that $\mathcal I^\omega _j$ does not intersect $J^\prime$}
\end{proof}

Now we inductively set an increasing sequence  $\{ N_j\}_{j\geq 0}$ of nonnegative integers. Let $\{ \tilde \rho _j\} _{j\geq 1}$ be a sequence  of real numbers in $(0, 1]$ such that  $\tilde \rho _j$ converges to zero as $j$ goes to infinity.  
 Let $N_0 =0$. 
For a given integer $N_{j-1}$ with odd integer $j\geq 1$, we  take  an integer $N_j\geq 6N_{j-1}/\tilde \rho _j+2$ such that if  $s\in \mathcal A^{\N _0}$ satisfies that $[s]_{N_{j-1}}^{N_{j}-1} =[s^\prime]_{0}^{N_{j}-N_{j-1}-1}$, then
\begin{equation} \label{eq:sss6}
 \left\vert B_{\left[N_j/2\right]}  (\varphi _0 ;\omega , X_{s}(\omega))  -  B_{\left[N_j/2\right]-N_{j-1}} (\varphi _0 ;\theta ^{N_{j-1}}\omega , X_{s^\prime}(\theta ^{N_{j-1}}\omega)) \right\vert \leq  \tilde \rho _j, 
\end{equation}
for all $\omega \in \Omega$.
(We can indeed find such $N_j$:~For each $\omega \in \Omega$, $\tilde s \in \mathcal A^{N_{j-1}}$ and $s\in \mathcal A^{\N _0}$ satisfying that $[s]_{N_{j-1}}^{N_{j}-1} =[\tilde ss^\prime]_{N_{j-1}}^{N_{j}-1}$($=[s^\prime]_{0}^{N_{j}-N_{j-1}-1}$), where $\tilde s s^\prime =(\tilde s_0 \tilde s_1 \ldots \tilde s_{N_{j-1}-1} s^\prime _0 s^\prime _1\ldots)$ is the concatenation of $\tilde s=(\tilde s_0 \tilde s_1 \ldots \tilde s_{N_{j-1}-1})$ and $s^\prime$, we have 
\begin{equation} \label{eq:sss6alt}
 \left\vert B_{\left[N_j/2\right]}  (\varphi _0 ;\omega , X_{s}(\omega))  -  B_{\left[N_j/2\right]} (\varphi _0 ;\omega , X_{\tilde s s^\prime}(\omega)) \right\vert \leq  \tilde \rho _j/3, 
\end{equation}
by applying Lemma \ref{lem:rev1} with $\rho =\tilde \rho _j/3$, $m=N_{j-1}$, $n=N_j$ and $t=\tilde s s^\prime$, and with the notation $\xi _j= 1- N_{j-1}/[N_j/2]$, we also have
%we also have that
 \begin{equation} \label{eq:sss6b}
 \left\vert  B_{\left[N_j/2\right]} (\varphi _0 ;\omega , X_{\tilde s s^\prime}(\omega)) -\xi _j\cdot B_{\left[N_j/2\right]-N_{j-1}} (\varphi _0 ;\theta ^{N_{j-1}}\omega , X_{s^\prime}(\theta ^{N_{j-1}}\omega)) \right\vert \leq \tilde \rho _j/3
\end{equation}
with $\vert 1-\xi _j\vert \leq \tilde \rho _j/3$,
by applying \eqref{eq:revisb1} and \eqref{eq:revisb2} with $n^\prime =[N_j/2]$, $m=N_{j-1}$ and $x=X_{\tilde s s^\prime }(\omega)$, together with Lemma \ref{lem:22} and the fact that $N_{j-1}/[N_j/2]\leq \tilde \rho _j/3$.)
%,  after increasing $N_j$  if necessary.)
On the other hand, for a given integer $N_{j-1}$ with even integer $j\geq 1$, we  take  an integer $N_j\geq 6N_{j-1}/\tilde \rho _j+2$ such that if  $s\in \mathcal A^{\N _0}$ satisfies that  $[s]_{N_{j-1}}^{N_{j}-1} =[s^{\prime \prime}]_{0}^{N_{j}-N_{j-1}-1}$, then
\begin{equation}\label{eq:sss7}
 \left\vert B_{\left[N_j/2\right]} (\varphi _0 ;\omega , X_{s}(\omega))  -  B_{\left[N_j/2\right]-N_{j-1}} (\varphi _0 ;\theta ^{N_{j-1}}\omega , X_{s^{\prime \prime}}(\theta ^{N_{j-1}}\omega)) \right\vert \leq \tilde \rho _j,
\end{equation}
for all $\omega \in \Omega$.
% \begin{equation} \label{eq:sss7b}
% \left\vert  B_{\left[N_j/2\right]} (\varphi _0 ;\omega , X_{\tilde s s^{\prime \prime}}(\omega)) -B_{\left[N_j/2\right]-N_{j-1}} (\varphi _0 ;\theta ^{N_{j-1}}\omega , X_{s^{\prime \prime}}(\theta ^{N_{j-1}}\omega)) \right\vert \leq \tilde \rho _j/2. 
%\end{equation}
Finally, we set a sequence $\bar s \in \mathcal A^{\N _0}$ by
\[
[\bar s]_{N_{j-1}}^{N_j-1} =
\left\{\begin{array}{ll}
[s^{\prime}]_{0}^{N_j-N_{j-1}-1} & (j: \text{odd})\\ \relax
[s^{\prime \prime}]_{0}^{N_j-N_{j-1}-1}& (j:\text{even}) .
\end{array}\right.
\]

It follows from \eqref{eq:sss5} and \eqref{eq:sss6}   that for any  $\omega \in \Omega$ and odd integer $j \geq 1$, we have
\[
 \left\vert B_{\left[N_j/2\right]} (\varphi _0 ;\omega , X_{\bar s}(\omega))  - 0 \right\vert \leq \tilde \rho _j.
\]
Hence for any $\omega \in \Omega$, 
\[
%\liminf _{n\to \infty} B_n(\varphi _0;\omega , X_{\bar s}(\omega)) \leq 
\lim _{\ell \to\infty} B_{\left[N_{2\ell +1}/2\right]} (\varphi _0;\omega , X_{\bar s}(\omega)) =0.
\]
Furthermore, it follows from  \eqref{eq:revisa7} and  \eqref{eq:sss7}     that, for any  $\omega \in \Omega$ and  even integer $j\geq 1$, 
\begin{multline*}
 \left\vert B_{\left[N_j/2\right]} (\varphi _0 ;\omega , X_{\bar s}(\omega))  -\int \left(\int \varphi _0d[(h(\cdot ))_*m] \right)d\mathbb P  \right\vert \\
  \leq \tilde \rho _j + \rho _{\left[N_j/2\right]-N_{j-1}}(\theta ^{N_{j-1}}\omega).
\end{multline*}
Hence, due to that $[N_j/2] -N_{j-1}\geq N_{j-1}$ and $\theta (\Gamma)\subset \Gamma$,  for any $\omega \in \Gamma$ we have
\[
%\limsup _{n\to \infty} B_n(\varphi _0;\omega , X_{\bar s}(\omega)) \geq 
\lim _{\ell \to\infty} B_{\left[N_{2\ell}/2\right]}(\varphi _0;\omega , X_{\bar s}(\omega)) =\int \left(\int \varphi _0d[(h(\cdot ))_*m] \right)d\mathbb P > 0.
\]
The last inequality holds because  the support of $h(\omega) _*m$ coincides with $\Sone$ for each $\omega$.
%we have that $\omega \in \Gamma _{N_j}(2^{-j})$. OK? $N_j$ may depend on $\omega$, and it is problem?
Consequently, $X_{\bar s}$ has historic  behaviour for $\mathbb P$-almost every $\omega \in \Omega$. 

At the end, we show that the past random orbit of $X_{\bar s}$  is $\mathbb P$-almost surely dense.
Let $j$ be an even integer and fix $\omega \in \Gamma$. We denote $\sigma ^{N_{j-1}} \bar s$  by $\bar s^j$. 
Then, for each $0\leq \ell \leq N_{j-1}$,
\[
f^{(N_{j-1}+\ell)}_{\theta ^{-(N_{j-1}+\ell)}\omega}\left(X_{\bar s}\left (\theta ^{-(N_{j-1}+\ell)}\omega\right)\right)= f_{\theta ^{-\ell}\omega}^{(\ell)}\left(X_{\bar s^j}\left(\theta ^{-\ell}\omega\right)\right)
\] 
by 
%We let $\sigma ^{N_{j-1}} \bar s$ denote by $t(j)$. 
%Then, by
  Lemma \ref{lem:22}, and
 $
\left[\bar s^j\right]_{\ell}^{N_j-N_{j-1}-1} 
= \left[   s^{\prime \prime}\right]_{\ell}^{N_j-N_{j-1}-1}  .
$
%(Note that $N_j -N_{j-1}-1\geq N_{j-1}$.)
    %and construction of $X_s(\omega)$,  b
Thus, by using Lemma \ref{lem:22} again,  one can see that both  $ f_{\theta ^{-\ell}\omega}^{(\ell)}(X_{\bar s^j}(\theta ^{-\ell}\omega))$ and $f_{\theta ^{-\ell}\omega}^{(\ell)} (X_{s^{\prime \prime}} (\theta ^{-\ell} \omega))$ 
%and $x_2:=f^{(N_{j-1})}_{\theta ^{-N_{j-1}}\omega}(X_{s^{\prime \prime}} (\theta ^{-N_{j-1}}\omega))$ 
are in 
%$\mathcal I ^\omega _{\sigma ^{N_{j-1}}s^{\prime \prime}} $. On the other hand, it follows from that 
$
\mathcal I ^\omega _t
$ with a word $t=\left[   s^{\prime \prime}\right]_{\ell}^{N_j-N_{j-1}-1}$ of length $N_j-N_{j-1}-\ell$. 
Therefore, noting that $N_j-N_{j-1}-\ell \geq  N_j$ for each $0\leq \ell \leq N_{j-1}$, together with \eqref{assumption2} and \eqref{eq:defofi}, we get
\[
\left \vert  f_{\theta ^{-\ell}\omega}^{(\ell)}(X_{\bar s^j}(\theta ^{-\ell}\omega)) - f_{\theta ^{-\ell}\omega}^{(\ell)} (X_{s^{\prime \prime}} (\theta ^{-\ell} \omega))\right\vert \leq C_\omega \lambda ^{-(N_j-N_{j-1}-\ell)}\leq C_\omega \lambda ^{-N_{j-1}},
\]
where $C_\omega >0$ is the maximum of $\vert \mathcal I _j^\omega\vert$ over $0\leq j\leq k-1$. 
%positive constant depending only on the random Markov partition $\{\mathcal I _j^\omega \}_{j=0}^{k-1}$ at $\omega$.
%%On the other hand, the
% %length of $\mathcal I ^\omega _t$ 
%  whose length does not exceed $\lambda ^{-(N_j-N_{j-1})}\leq \lambda ^{-N_{j-1}}$ due to \eqref{assumption2}, \eqref{eq:defofi} and the fact that $t\in \mathcal A^{N_j-N_{j-1}}$. 
%Therefore, 
%%it follows from $\lambda ^{-(N_j-N_{j-1})}$ (by Lemma \ref{lem:rev1}) that
%%  for some positive constant $C_\omega$  only depending on the homeomorphism $h(\omega):\Sone \to \Sone$. 
%%Therefore, 
%the distance between $x_1$ and $x_2$ can be arbitrary small as taking $j$ sufficiently large.
%%(see the proof of Lemma \ref{lem:1}).
%
%
%
%Recall that
Since the past random orbit of $X_{s^{\prime \prime}}$ is $\mathbb P$-almost surely dense on $\Sone$ by Theorem \ref{thm:graphSRBfull}, we immediately conclude that the past random orbit of $X_{\bar s}$  is $\mathbb P$-almost surely dense on $\Sone$. This completes the proof of Theorem  \ref{conj:1} by   Proposition \ref{randomDowker}.

\section*{Acknowledgments}
The author would like to express his deep gratitude to Shin Kiriki and Teruhiko Soma for many fruitful discussions and valuable comments, without which this paper would not have been possible.
The author also greatly appreciates helpful suggestions by Hiroki Takahashi.
Finally the author is  deeply grateful to the anonymous referee for
the many suggestions, all of which substantially improved the paper.

%For precise definition and general properties of a random Markov partition, we refer to \cite{BBM-D}.
\bibliographystyle{my-amsplain-nodash-abrv-lastnamefirst-nodot}
\bibliography{MATHabrv,KNS}

\end{document}